\newtheorem{theorem}{Theorem}[section]
\newtheorem{conjecture}[theorem]{Conjecture}
\newtheorem{corollary}[theorem]{Corollary}
\newtheorem{lemma}[theorem]{Lemma}
\theoremstyle{remark}
\numberwithin{equation}{section}
\newcommand{\Acal}{\mathscr{A}}
\newcommand{\Kcal}{\mathscr{K}}
\newcommand{\Lcal}{\mathscr{L}}
\newcommand{\Ocal}{\mathscr{O}}
\newcommand{\Pcal}{\mathscr{P}}
\newcommand{\Pro}{\mathbb{P}}
\newcommand{\C}{\mathbb{C}}
\newcommand{\Q}{\mathbb{Q}}
\newcommand{\R}{\mathbb{R}}
\newcommand{\rad}{\mathrm{rad}}
\newcommand{\Pic}{\mathrm{Pic}}
\newcommand{\rk}{\mathrm{rank}\,}
\newcommand{\supp}{\mathrm{supp}}
\newcommand{\NS}{\mathrm{NS}}
\newcommand{\Num}{\mathrm{Num}}
  \DeclareFontFamily{U}{wncy}{}
    \DeclareFontShape{U}{wncy}{m}{n}{<->wncyr10}{}
    \DeclareSymbolFont{mcy}{U}{wncy}{m}{n}
    \DeclareMathSymbol{\Sha}{\mathord}{mcy}{"58}
\begin{document}
\title[]{On the arithmetic case of Vojta's conjecture with truncated counting functions}

\author{Hector Pasten}
\address{ Departamento de Matem\'aticas\newline
\indent Pontificia Universidad Cat\'olica de Chile\newline
\indent Facultad de Matem\'aticas\newline
\indent 4860 Av. Vicu\~na Mackenna\newline
\indent Macul, RM, Chile}
\email[H. Pasten]{hector.pasten@mat.uc.cl}%

\thanks{Supported by ANID (ex CONICYT) FONDECYT Regular grant 1190442 from Chile.}
\date{\today}
\subjclass[2010]{Primary: 11D75; Secondary: 11J86, 11J97} %
\keywords{Vojta's conjectures, truncated counting functions, $abc$ conjecture, linear forms in logarithms}%

\begin{abstract}  We prove a Diophantine approximation inequality for rational points in varieties of any dimension, in the direction of Vojta's conjecture with truncated counting functions. Our results also provide a bound towards the $abc$ conjecture which in several cases is subexponential. The main theorem gives a lower bound for the truncated counting function relative to a divisor with sufficiently many components, in terms of the proximity to an algebraic point. Furthermore, we show that the Lang-Waldschmidt conjecture implies a special case of Vojta's conjecture with truncation in arbitrary dimension. Our methods are based on the theory of linear forms in logarithms and a geometric construction.
\end{abstract}

\maketitle



\section{Introduction}

\subsection{Diophantine approximation with truncated counting functions} In \cite{VojtaABC}, Vojta proposed a far-reaching generalization of the $abc$ conjecture. Although  his conjecture was originally formulated  for rational and algebraic points, it also admits analogues in the setting of  holomorphic maps (Nevanlinna theory) and function fields, see \cite{VojtaCIME}. In the arithmetic case, Vojta's conjecture is a Diophantine approximation statement in varieties of any dimension and involving truncated counting functions; the latter are a generalization of (the logarithm of) the radical of an integer. 

While there are several unconditional results towards Vojta's conjecture in the setting of Nevanlinna theory as well as for function fields, at this point the case of rational points remains largely mysterious. The only unconditional results towards the arithmetic case of Vojta's conjecture with truncated counting functions are proved in dimension one, namely, exponential bounds towards the $abc$ conjecture \cite{ABC1,ABC2, ABC3, MurtyPasten}. 

In this work we provide the first unconditional result towards Vojta's conjecture with truncated counting functions in varieties of arbitrary dimension. Our main result is Theorem \ref{ThmMain2} (with the companion Theorem \ref{ThmRat}) which bounds from below the truncated counting function relative to a divisor with sufficiently many components, in terms of the proximity function to an algebraic point. For the sake of comparison, we will also formulate Corollary \ref{CoroMain} which is more reminiscent of Vojta's conjecture, although it should be remarked that Theorems \ref{ThmMain2} and  \ref{ThmRat} give a stronger result. In general terms, Corollary \ref{CoroMain} asserts that given a smooth projective variety $X$ over $\Q$, a reduced effective divisor $D$ on $X$ with sufficiently many components over $\Q$, a place $v$, an algebraic point $P$ in $X-\supp(D)$, and $\epsilon>0$, we have
$$
\lambda_{X,v}(P,x) < \exp\left(\epsilon\cdot  N_X^{(1)}(D,x) \right) +(\log^* h_X(\Ocal(D),x))^{1+\epsilon} + O(1)
$$
for all rational points $x\in X(\Q)$ outside certain proper Zariski closed subset of $X$. Here, $\lambda_{X,v}(P,x)$ denotes the $v$-adic proximity of $x$ to $P$, $N_X^{(1)}(D,x)$ is the truncated counting function relative to $D$, $h_X(\Ocal(D),x)$ is the height relative to the sheaf $\Ocal(D)$, and $\log^*t=\log \max\{e, t\}$ ---a more detailed description of the notation appears later in this introduction.

We begin our discussion with Theorem \ref{ThmMain1} which is an effective Diophantine approximation result for algebraic numbers, involving the radical of an integer. Theorem \ref{ThmMain1} is relevant in our study since it is a key input to prove our main results in higher dimensional varieties. After discussing an application of Theorem \ref{ThmMain1} in the context of the $abc$ conjecture providing a bound which in several cases is subexponential (see Corollary \ref{CoroSubExpABC}), we present the necessary background to formulate Vojta's conjecture. With these preliminaries we will be able to introduce our main results, namely, Theorems \ref{ThmMain2} and  \ref{ThmRat} along with Corollary \ref{CoroMain}.   

Finally, we will include a discussion on a conjectural improvement: We will show that the Lang-Waldschmidt conjecture from transcendence theory implies a special case of Vojta's conjecture with truncated counting functions in arbitrary dimension. See Theorem \ref{ThmLW}.

It should be noted that although Theorem \ref{ThmMain1} is used as a tool to prove our main results, it can be of independent interest due to the fact that it is effective and because of its Corollary \ref{CoroSubExpABC}.

\subsection{Diophantine approximation of algebraic numbers} Recall that $\log^* t = \log \max\{e, t\}$ and let us write $\log^*_r(t)$ for the $r$-th iteration of $\log^*$. For $x=b/c\in \Q$ with $b,c$ coprime integers, the (logarithmic) height is $h(x)=\log \max\{|b|,|c|\}$. For a non-zero integer $m$ we let $\rad(m)$ be the radical of $m$, that is, the product of the different positive primes dividing $m$ without repetition. We write $M_\Q=\{\infty,2,3,5,...\}$ for the set of places of $\Q$ and for each $v\in M_\Q$ we fix an absolute value $|-|_v$ on $\overline{\Q_v}$ extending the usual $v$-adic one in $\Q$. We also fix an embedding $\overline{\Q}\subseteq \overline{\Q_v}$.

The following (effective)  Diophantine approximation estimate for algebraic numbers will be a crucial tool in our main results.
\begin{theorem} \label{ThmMain1} Let $v\in M_\Q$, let $\alpha\in \overline{\Q}^\times$ and let $\epsilon>0$. There is a number $\kappa(v,\alpha, \epsilon)$ depending effectively on $v$, $\alpha$, and $\epsilon$ such that for all $x=b/c\in \Q^\times -\{\alpha\}$ with $b,c$ coprime integers, we have
$$
- \log |\alpha-x|_v < (\log^* h(x))\exp\left(\frac{(1+\epsilon)\log^* \rad(bc)}{\log^*_2 \rad(bc)}\log^*_3 \rad(bc) + \kappa(v,\alpha,\epsilon)\right).
$$
\end{theorem}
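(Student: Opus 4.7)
The strategy is to reduce the lower bound for $|\alpha - x|_v$ to the $v$-adic estimation of a nonzero $S$-unit expression $\Lambda$, where $S$ consists of the rational primes dividing $bc$, and then to apply a sharp quantitative bound for linear forms in $v$-adic logarithms of algebraic numbers (Matveev for $v=\infty$, K.~Yu for non-archimedean $v$), whose leading constant grows only exponentially in the number of terms. This is then optimized against the prime-counting constraint on $n := \omega(bc)$ coming from $R := \rad(bc)$. We may assume $b \ne 0$ (otherwise $|\alpha-x|_v = |\alpha|_v$ is bounded and the estimate follows by making $\kappa$ large). Since $\gcd(b,c) = 1$, factor $b/c = \eta \prod_{i=1}^n p_i^{e_i}$, where $\eta = \pm 1$, $p_1,\ldots,p_n$ are the distinct primes dividing $bc$, and $e_i \in \Z$ with $|e_i| \le h(x)/\log 2$. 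Put
$$
\Lambda := \eta \alpha^{-1}\prod_{i=1}^n p_i^{e_i} - 1 \;=\; x/\alpha - 1,
$$
which is nonzero by the hypothesis $x \ne \alpha$. From $\alpha - x = -\alpha\Lambda$ we get $-\log|\alpha-x|_v = -\log|\Lambda|_v + O_{v,\alpha}(1)$.

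Next, apply the Matveev/Yu bound to the linear form in logarithms of $\alpha, p_1,\ldots,p_n$ underlying $\Lambda$, obtaining
$$
-\log|\Lambda|_v \;\le\; C_0(v,\alpha)\cdot C_1^{\,n}\cdot \Bigl(\prod_{i=1}^n \log p_i\Bigr)\cdot \log^* B,
$$
with $B = \max(1,|e_i|) \le h(x)/\log 2 + O(1)$ and, crucially, $C_1$ absolute (so the $n$-dependence of the constant is only exponential, not $n^{O(n)}$). Two elementary observations finish the proof. First, since $\sum_i \log p_i = \log R$, AM--GM gives $\prod_i\log p_i \le (\log R/n)^n$. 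Second, since the $p_i$ are distinct positive primes, $R \ge $ the product of the first $n$ primes, so Chebyshev's estimate yields the automatic bound $n \le (1+o(1))\log R / \log\log R$ as $R \to \infty$. Therefore
$$
\log\Bigl(C_1^n \prod_i \log p_i\Bigr) \;\le\; n\bigl(\log C_1 + \log\log R - \log n\bigr),
$$
which is increasing in $n$ throughout $[1,\log R/\log\log R]$ for $R$ large; at the top of that range it equals $(1+o(1))\log R \cdot \log\log\log R / \log\log R$. Exponentiating, multiplying by $\log^* B \le \log^* h(x) + O(1)$, and absorbing the lower-order terms into $\kappa(v,\alpha,\epsilon)$ yields the theorem.

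The main obstacle is the precise alignment of three ingredients: the $C_1^n$ (rather than $n^{O(n)}$) shape of the Matveev--Yu constant, which is essential for achieving a sub-exponential dependence on $\log R$; the AM--GM bound $\prod\log p_i \le (\log R/n)^n$; and the automatic constraint $n \le \log R/\log\log R$ from distinctness of the $p_i$. Once these are lined up, the proof is essentially an optimization exercise in which the $(1+\epsilon)$ slack absorbs the contribution $\log C_1 / \log\log\log R$ that vanishes only in the limit $R\to\infty$; effectivity of $\kappa$ is maintained because all input constants (Matveev/Yu, Chebyshev) are effective.
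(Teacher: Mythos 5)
Your proposal is correct in substance and follows essentially the same route as the paper: reduce to a nonvanishing expression built from the primes dividing $bc$, invoke a linear-forms-in-logarithms bound whose dependence on the number $n$ of terms is exponential rather than $n^{O(n)}$, and then optimize using AM--GM on $\prod_j \log p_j$ together with the effective bound $\omega(\rad(bc)) < (1+\epsilon)\log^* \rad(bc)/\log^*_2 \rad(bc)$ and the monotonicity of $t \mapsto t\log(A/t)$; this is precisely the computation in Section \ref{SecMain1}. The one genuine difference is the black box: you apply Matveev and Yu directly to $x/\alpha-1$, whereas the paper quotes the Evertse--Gy\"ory result (Lemma \ref{LemmaEG}, i.e.\ Theorem 4.2.1 of \cite{EGbook}), applied to the group generated by $-1$ and the primes dividing $bc$; the author explicitly notes that the direct route is possible (and was used in an earlier version), but that the Evertse--Gy\"ory packaging smooths the non-archimedean case. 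That is where your sketch glosses over real, if standard, technicalities: Yu's $p$-adic theorem comes with hypotheses on the numbers involved (unit conditions at the prime below $v$, which may divide $bc$, while $\alpha$ itself need not be integral or a unit there), so the stated shape of the bound for $-\log|\Lambda|_v$ requires a preliminary reduction in that case. Two further imprecisions are harmless: the base of the exponential in the Matveev/Yu constant is not absolute but depends on $d=[\Q(\alpha):\Q]$ (compare the factor $(16ed)^{3n}$ in Lemma \ref{LemmaEG}), which is fine since $\alpha$ is fixed and the resulting $n\log d$ term is absorbed exactly like your $\log C_1$ term; and the $(1+\epsilon)$ slack must also absorb the $(1+o(1))$ in your Chebyshev-type bound on $n$, which the effective estimate (as in \cite{Robin}) indeed provides.
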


This result is deduced from the theory of linear forms in logarithms. In the archimedian case one can use a celebrated result of Matveev \cite{Matveev} while in the $p$-adic case one can use the non-archimedian counterpart due to Yu \cite{Yu}. Actually, rather than directly applying the results of Matveev and Yu (which is a possible approach) we will use a theorem due to Evertse and Gy\"ory (Theorem 4.2.1 in \cite{EGbook}) concerning effective approximations of algebraic numbers by finitely generated subgroups of the multiplicative group of a number field; this result simplifies the argument in the non-archimedian case. The Evertse-Gy\"ory theorem is proved using the aforementioned results of Matveev and Yu, along with some new results from geometry of numbers.

\subsection{Application: Subexponential $abc$} Let us recall the $abc$ conjecture of Masser and Oesterl\'e.

\begin{conjecture}[$abc$ conjecture] Let $\epsilon>0$. There is a constant $K_\epsilon>0$ such that the following holds:  Let $a$, $b$, $c$ be coprime positive integers with $a+b=c$. Then $c< K_\epsilon \cdot \rad(abc)^{1+\epsilon}$.
\end{conjecture}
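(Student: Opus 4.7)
The $abc$ conjecture is famously open, so I cannot propose an approach that actually resolves it; the best I can do with the tools in the paper is to recover what Corollary \ref{CoroSubExpABC} asserts, namely a subexponential bound falling well short of the polynomial target. My plan therefore has two parts: describe how Theorem \ref{ThmMain1} bounds $c$ in terms of $\rad(abc)$, and then honestly identify the gap between that bound and the conjecture.

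The plan. Suppose $a+b=c$ with $a,b,c$ positive and $\gcd(a,b,c)=1$; it follows automatically that $a,b,c$ are pairwise coprime. For each prime $p\mid c$ I would apply Theorem \ref{ThmMain1} at the non-archimedean place $v=p$ with $\alpha=1$ and $x=-a/b$, written in lowest terms. Since $\gcd(b,c)=1$ one has $|b|_p=1$, hence
$$
|\alpha-x|_p=\left|\frac{a+b}{b}\right|_p=|c|_p=p^{-v_p(c)},
$$
while $h(x)\le \log c$ and the radical that plays the role of $\rad(bc)$ in the theorem is $\rad(ab)\le \rad(abc)$. Theorem \ref{ThmMain1} then yields, for every prime $p\mid c$,
$$
v_p(c)\log p<(\log^* \log c)\cdot \exp\!\left(\frac{(1+\epsilon)\log^*\rad(abc)}{\log^*_2\rad(abc)}\log^*_3\rad(abc)+\kappa(p,1,\epsilon)\right).
$$
Summing over the $\omega(c)\le(\log\rad(abc))/\log 2$ primes dividing $c$, using $\log c=\sum_{p\mid c}v_p(c)\log p$, and tracking the (mild) dependence of $\kappa(p,1,\epsilon)$ on $p$ via Yu's $p$-adic lower bounds, I would solve for $\log c$ to obtain an estimate of the shape
$$
c<\exp\!\left(C_\epsilon\cdot\log\rad(abc)\cdot\exp\!\left(\frac{(1+\epsilon)\log^*\rad(abc)\cdot\log^*_3\rad(abc)}{\log^*_2\rad(abc)}\right)\right),
$$
which is precisely the subexponential $abc$ bound that Corollary \ref{CoroSubExpABC} is advertising.

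The main obstacle. The bound produced by this plan is strictly subexponential in $\rad(abc)$, whereas the conjecture demands the polynomial estimate $c\le K_\epsilon\rad(abc)^{1+\epsilon}$. The obstruction is intrinsic to the method: even in the sharpest current incarnations of the theory of linear forms in logarithms due to Matveev and Yu, one cannot force the output to be polynomial in the radical, because the dependence on the parameters of the linear form is inherently exponential. Closing this gap would require a dramatic strengthening such as the Lang--Waldschmidt conjecture, which, as the paper indicates through Theorem \ref{ThmLW}, is already strong enough to imply nontrivial cases of Vojta's conjecture with truncated counting functions in arbitrary dimension. Absent such new input, the plan above reaches exactly Corollary \ref{CoroSubExpABC} and no further; the $abc$ conjecture itself remains open.
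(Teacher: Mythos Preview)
You are right that the $abc$ conjecture is open and the paper does not attempt to prove it; there is nothing to compare on that score. But your derivation of Corollary~\ref{CoroSubExpABC} differs from the paper's and has a genuine gap. The paper uses a \emph{single} application of Theorem~\ref{ThmMain1} at the archimedean place: with $v=\infty$, $\alpha=1$, $x=b/c$ one has $-\log|1-x|=\log(c/a)$, which directly bounds $\log c-\log a$ and, after absorbing $\log^* h(x)$ via \eqref{EqnABC3}, yields the stated inequality with the factor $a$ on the right. That factor is not an artefact; the ``in particular'' clause needs the hypothesis $a<c^{1-\eta}$ precisely in order to remove it.

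Your route instead applies Theorem~\ref{ThmMain1} at every finite $v=p\mid c$ and sums. The step that fails is the assertion that the dependence of $\kappa(p,1,\epsilon)$ on $p$ is ``mild.'' In Yu's $p$-adic bounds, and in the Evertse--Gy\"ory theorem behind Lemma~\ref{LemmaEG}, the constant $\kappa_0(\alpha,p)$ carries a factor polynomial in $p$; tracing the proof of Theorem~\ref{ThmMain1}, this becomes an additive term of order $\log p$ inside the exponential, so $\kappa(p,1,\epsilon)\ge c_0\log p+O_\epsilon(1)$. Summing over $p\mid c$ then introduces an extra factor comparable to $\sum_{p\mid c}p^{c_0}$, hence to a power of $\Pcal(c)$, which is not bounded by any function of $\log R$. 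What your argument actually delivers is therefore a bound of Stewart--Yu type \eqref{EqnP}, not the display you wrote, and not subexponential without a further hypothesis on $\Pcal(c)$. In particular the inequality you claim is not ``precisely'' Corollary~\ref{CoroSubExpABC}: it omits the factor $a$, which the paper's archimedean argument cannot avoid and which your $p$-adic argument cannot legitimately remove.
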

Consequences of this conjecture are well-known and discussed elsewhere. Instead, here we will focus on unconditional partial progress on the problem. 

Under the assumptions of the $abc$ conjecture, let us write $R=\rad(abc)$. Currently, the best unconditional bound in the direction of this conjecture is of exponential nature, proved by Stewart and Yu \cite{ABC3}:
\begin{equation}\label{EqnABC3}
c<  \exp\left( \kappa R^{1/3}(\log R)^3\right)
\end{equation}
where $\kappa$ is an effective constant. See also \cite{ABC1,ABC2,MurtyPasten} for other (effective) bounds, all of which are exponential on a power of $R$, and see \cite{Gyory} for the currently best bounds in the number field setting.

Theorem \ref{ThmMain1} in the archimedian case (thus, only requiring results of Matveev \cite{Matveev}) implies the following effective  bound for the $abc$ conjecture which, in several cases, is subexponential.
\begin{corollary}[Subexponential $abc$ bounds]\label{CoroSubExpABC} Let $\epsilon>0$. There is a number $\kappa_\epsilon>0$ effectively depending on $\epsilon$ such that the following holds:  Let $a$, $b$, $c$ be coprime positive integers with $a+b=c$ and $a<b$. Then
$$
c<   a\cdot \exp\left(\kappa_\epsilon\cdot R^{(1+\epsilon)(\log^*_3 R )/(\log^*_2 R)}\right)
$$
where $R=\rad(abc)$. In particular, if $a< c^{1-\eta}$ for some $\eta>0$, then
$$
c <\exp\left(\eta^{-1}\cdot \kappa_\epsilon\cdot R^{(1+\epsilon)(\log^*_3 R )/(\log^*_2 R)}\right).
$$
\end{corollary}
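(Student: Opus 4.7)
The plan is to apply Theorem~\ref{ThmMain1} in the archimedean case with $\alpha = 1$ and the rational $x = b/c$, taking $\epsilon' := \epsilon/2$ in place of $\epsilon$. The hypotheses $a+b=c$ and $\gcd(a,b,c)=1$ force $a,b,c$ to be pairwise coprime, so $b/c$ is already written in lowest terms, $h(x)=\log c$, and $\rad(bc)=\rad(b)\rad(c)\leq\rad(abc)=R$. Moreover $|1-x|_\infty = |c-b|/c = a/c$, hence $-\log|1-x|_\infty = \log(c/a)$. An elementary calculation shows that $g(t) := (\log^* t)(\log^*_3 t)/\log^*_2 t$ is increasing for $t$ large, so substituting $\rad(bc)\leq R$ into the conclusion of Theorem~\ref{ThmMain1} gives
\[
\log(c/a) \;<\; (\log^* \log c)\cdot e^{\kappa}\cdot R^{(1+\epsilon/2)(\log^*_3 R)/\log^*_2 R},
\]
where $\kappa$ depends effectively on $\epsilon$ only, and small values of $R$ are absorbed into the constant.

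The only remaining step is to eliminate the nuisance factor $\log^*\log c$, which is not \emph{a priori} controlled by $R$. For this we invoke the Stewart--Yu bound~\eqref{EqnABC3}, which already yields $\log c\ll R^{1/3}(\log R)^3$ and hence $\log^*\log c\ll \log R$. Substituting gives
\[
\log(c/a)\;\ll\; (\log R)\cdot R^{(1+\epsilon/2)(\log^*_3 R)/\log^*_2 R}.
\]
Since $(\log R)(\log^*_3 R)/\log^*_2 R$ grows faster than $\log\log R$, for $R$ large enough $\log R\leq R^{(\epsilon/2)(\log^*_3 R)/\log^*_2 R}$; this extra factor is absorbed by bumping the exponent $1+\epsilon/2$ up to $1+\epsilon$ and enlarging the constant. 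The outcome is $c/a<\exp(\kappa_\epsilon R^{(1+\epsilon)(\log^*_3 R)/\log^*_2 R})$, which is the main inequality. The ``in particular'' clause is immediate: if $a<c^{1-\eta}$ then $\log(c/a)>\eta\log c$, so dividing the main inequality by $\eta$ bounds $\log c$.

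The main obstacle is purely bookkeeping: verifying monotonicity of $g$ for $t\gg 1$ and checking that the stray factor of $\log R$ really does get swallowed into the exponent when $\epsilon/2$ is replaced by $\epsilon$. Once Theorem~\ref{ThmMain1} is in hand, the argument is otherwise formal. I note that the appeal to Stewart--Yu is not essential; the inequality $\log c - \log a < (\log^* \log c)\cdot B$ can itself be solved for $\log c$ by a self-bootstrap of the form ``$Y<B\log Y$ implies $Y\ll B\log B$'', so the proof can in principle be made to depend only on Theorem~\ref{ThmMain1}, i.e., ultimately on Matveev's theorem.
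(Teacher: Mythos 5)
Your argument is correct and is essentially the paper's own proof: apply Theorem \ref{ThmMain1} with $v=\infty$, $\alpha=1$, $x=b/c$ (so $-\log|1-x|_\infty=\log(c/a)$, $h(x)=\log c$, $\rad(bc)\le R$), then use the Stewart--Yu bound \eqref{EqnABC3} to control the stray factor $\log^*\log c$ and absorb it, together with the constants, into the exponent by bumping $1+\epsilon/2$ to $1+\epsilon$. One caveat on your closing aside: the claim that the appeal to Stewart--Yu is inessential is not justified, because the inequality you obtain bounds $\log(c/a)$ rather than $\log c$, and since $a$ can be nearly $c/2$ the left-hand side does not control $\log c$ at all, so the self-bootstrap ``$Y<B\log Y\Rightarrow Y\ll B\log B$'' does not close; some a priori bound of the shape $\log\log c\ll\log R$ (any of \cite{ABC1,ABC2,ABC3,MurtyPasten}) genuinely enters, exactly as in the paper.
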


\subsection{Comparison with other subexponential bounds} For a positive integer $m$ let $\Pcal(m)$ be the largest prime factor of $m$, with $\Pcal(1)=1$.  Under some strong conditions on $\Pcal(a)$, $\Pcal(b)$, $\Pcal(c)$ there are other subexponential bounds for the $abc$ conjecture in the literature.

 In \cite{ABC3}, Stewart and Yu also prove
\begin{equation}\label{EqnP}
c< \exp( p' R^{\kappa\log^*_3 (R)/\log^*_2(R)} )
\end{equation}
where $R=\rad(abc)$, $p'=\min\{\Pcal(a),\Pcal(b), \Pcal(c)\}$ and $\kappa$ is an effective constant. The bound \eqref{EqnP} becomes subexponential on $R$ only when $p'$ is considerably smaller than $R$, namely, when $p'< R^{o(1)}$ with $o(1)$ a function converging to $0$. This last condition is rather restrictive and it depends on the prime factorization of $a$, $b$, and $c$. Over number fields, similar results are given in Theorem 1 of \cite{Gyory08}, proof of Theorem 1 in \cite{Gyory}, and Corollary 7 in \cite{Scoones}.  

 Corollary \ref{CoroSubExpABC} is substantially different in this aspect; it gives a subexponential estimate in a range directly defined by the size of $a$, $b$ and $c$ rather than by their prime factorization. This is relevant since the $abc$ conjecture is precisely an assertion about the prime factorization of $abc$.

After this discussion on the $abc$ conjecture, let us now focus on the topic of Vojta's conjecture with truncated counting functions and our main results.

\subsection{Diophantine approximation functions} All varieties in this article will be assumed to be geometrically irreducible. This is not restrictive: A smooth irreducible variety over $\Q$ having a $\Q$-rational point is necessarily geometrically irreducible.

Let $X$ be a smooth projective variety over $\Q$. We will need some standard functions on rational points of $X$ which play a central role in the study of Diophantine approximation. See \cite{VojtaCIME} for details.

 A bounded function will be denoted by $O(1)$ and, if necessary, dependence on parameters will be indicated as a subscript.

For an invertible sheaf $\Lcal$ on $X$ one has the associated \emph{height} function $h_X(\Lcal,-)$ defined on $X(\Q)$ up adding to a bounded error term.  Given a divisor $D$ on $X$ defined over $\Q$ one can choose a family of \emph{Weil functions} $\lambda_{X,v}(D,-)$ for $v\in M_\Q$, defined on the rational points of $X$ not in $\supp(D)$. The Weil functions are compatible with the height in the following sense: For $x\in X(\Q)$ not in $\supp(D)$ one has 
$$
h_X(\Ocal(D), x) = \sum_{v\in M_\Q} \lambda_{X,v}(D,x) + O(1).
$$
If $D$ is an effective divisor, the \emph{truncated counting function} relative to $D$ is
$$
N^{(1)}_X(D,x)=\sum_{p} \min\{\lambda_{X,p}(D,x), \log p\}
$$
for $x\in X(\Q)$ not in $\supp(D)$, where the sum is over primes $p$. For instance, if $x=b/c\in \Pro^1(\Q)$ with $b,c$ coprime integers, then one has $N^{(1)}_{\Pro^1}(0,x)=\log \rad(b)$ and $N^{(1)}_{\Pro^1}(\infty,x)=\log\rad(c)$ up to a bounded error. Furthermore, if $1-x=a/c$ then $N^{(1)}_{\Pro^1}(1,x)=\log \rad(a)$. Thus, the $abc$ conjecture can be equivalently formulated as the following conjectural bound for $x\in \Pro^1(\Q)-\{0,1,\infty\}$: 
\begin{equation}\label{EqnLogABC}
h_{\Pro^1}(\Ocal(1), x) < (1+\epsilon) N^{(1)}_{\Pro^1}([0]+[1]+[\infty], x) + O_\epsilon(1).
\end{equation}

\subsection{Vojta's conjecture with truncated counting functions}

The canonical sheaf of a variety $X$ will be denoted by $\Kcal_X$. 

In \cite{VojtaABC}, Vojta formulated a general Diophantine approximation conjecture involving truncated counting functions. For rational points over $\Q$ it is the following:
\begin{conjecture}[Vojta's conjecture] \label{ConjVojta} Let $X$ be a smooth projective variety defined over $\Q$. Let $D$ be a reduced effective normal crossings divisor on $X$ defined over $\Q$. Let $\Acal$ be an ample line sheaf on $X$ and let $\epsilon>0$. There is a proper Zariski closed set $Z\subseteq X$ depending on the previous data such that for all $x\in X(\Q)$ not in $Z$ we have
$$
h_X(\Kcal_X\otimes \Ocal(D),x) < N^{(1)}_X(D,x) + \epsilon \cdot h(\Acal,x)+ O(1).
$$
The implicit constant in the error term $O(1)$ can depend on all the data except $x$.
\end{conjecture}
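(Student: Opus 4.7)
Conjecture \ref{ConjVojta} is open in essentially every nontrivial case: already for $X=\Pro^1$ and $D=[0]+[1]+[\infty]$ it is equivalent to the $abc$ conjecture via \eqref{EqnLogABC}. Any realistic proposal must therefore be a structural reduction of the higher-dimensional statement to a one-dimensional $abc$-type bound, engineered so that a future strengthening of Theorem \ref{ThmMain1} from an exponential to a linear dependence on the truncated counting function would close the argument. The plan below carries out such a reduction, and its main value is in isolating precisely where a new ingredient must enter.

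First I would perform a series of geometric simplifications. By embedded resolution we may assume $D=\sum_{i=1}^r D_i$ is simple normal crossings with smooth $\Q$-irreducible components; the Vojta inequality on a blow-up implies it on the base modulo enlarging the excluded set $Z$ by the image of the exceptional divisor, and $\Kcal_X$ transforms in a controlled way under standard adjunction. Since $Z$ and the $O(1)$ may depend on $(X,D,\Acal,\epsilon)$, one may replace $\Acal$ by a large multiple so that $\Acal\otimes\Ocal(-D_i)$ is very ample for every $i$, restrict attention to a dense Zariski open $U\subseteq X$, and absorb its complement into $Z$. The height machine then gives clean functoriality statements for $h_X(\Kcal_X\otimes\Ocal(D),-)$ and for each $N^{(1)}_X(D_i,-)$.

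Next I would construct an auxiliary Belyi-type morphism. For each component $D_i$, pick a rational function $\varphi_i\in \Q(X)$ whose zero divisor contains $D_i$, whose polar divisor is supported on $\supp(D)$, and whose ramification outside $\supp(D)$ lies in a proper closed subset of $X$; after a standard adjustment one arranges that the critical values of $\varphi_i$ land in $\{0,1,\infty\}$. The tuple $\varphi=(\varphi_1,\ldots,\varphi_r)\colon X\dashrightarrow (\Pro^1)^r$ then satisfies $\varphi^* E \geq D$ with $E=\sum_i \pi_i^*([0]+[1]+[\infty])$, where $\pi_i$ denotes the $i$-th projection. Functoriality of heights and Weil functions translates the inequality on $X$ into its analogue for $((\Pro^1)^r, E)$, up to a correction of size $\epsilon h(\Acal,x)$ after enlarging $Z$. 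Projection onto each factor reduces matters to the one-variable situation: each coordinate $\pi_j(\varphi(x))=b_j/c_j$ with $c_j-b_j=a_j$ satisfies $N^{(1)}_{\Pro^1}([0]+[1]+[\infty],\pi_j(\varphi(x))) = \log \rad(a_jb_jc_j) + O(1)$.

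The final step, and the main obstacle, is to bound $h_{\Pro^1}(\Ocal(1),\pi_j(\varphi(x)))$ by $(1+\epsilon)\log \rad(a_jb_jc_j)+O_\epsilon(1)$. Theorem \ref{ThmMain1}, applied at the factor $b_j/c_j\in\Q$ and unwound in the spirit of Corollary \ref{CoroSubExpABC}, delivers only a bound of shape $\exp(\rad(a_jb_jc_j)^{1-o(1)})$, so the reduction above yields at best the weakening $h_X(\Kcal_X\otimes \Ocal(D),x) \leq \exp\!\bigl(N^{(1)}_X(D,x)^{1-o(1)}\bigr) + \epsilon h(\Acal,x) + O(1)$, which is strictly weaker than Conjecture \ref{ConjVojta}. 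Closing the gap is exactly the content of the $abc$ conjecture in dimension one; a higher-dimensional proof therefore requires either a transcendence-theoretic input strong enough to produce linear radical bounds (in the spirit of Lang--Waldschmidt, and consistent with the forthcoming Theorem \ref{ThmLW}), or a genuinely new geometric mechanism that extracts a linear inequality directly from the combinatorics of $D$ on $X$. The construction above makes the role of that missing ingredient completely explicit.
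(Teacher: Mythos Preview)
The statement in question is a conjecture that the paper does not prove; it is presented as motivation and context, and it remains open. There is therefore no proof in the paper to compare your proposal against, and you yourself concede in the opening sentence that your outline cannot close the argument without the $abc$ conjecture as input. So this is not a proof, nor is one expected here.

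Beyond that structural point, several steps in your reduction are not justified even as a conditional outline. The existence of rational functions $\varphi_i$ with zero divisor containing $D_i$ and polar divisor supported in $\supp(D)$ already forces nontrivial linear relations among the $D_i$ in $\Pic(X/\Q)$; this is exactly the extra hypothesis isolated in Theorems \ref{ThmMain2} and \ref{ThmRat}, and it is not part of the data in Conjecture \ref{ConjVojta}. The further requirement that the critical values of each $\varphi_i$ land in $\{0,1,\infty\}$ is Belyi's theorem in dimension one and has no known analogue for higher-dimensional $X$. Even granting the map $\varphi$, the inequality $\varphi^*E\ge D$ does not imply $\supp(\varphi^*E)\subseteq \supp(D)$, so the pulled-back truncated counting function from $(\Pro^1)^r$ need not be bounded by $N^{(1)}_X(D,x)$; and you offer no mechanism for recovering the term $h_X(\Kcal_X\otimes\Ocal(D),x)$ from heights on $(\Pro^1)^r$, since $\varphi^*\Ocal(E)$ bears no a priori relation to $\Kcal_X$. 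The paper's own unconditional results (Theorem \ref{ThmMain2}, Corollary \ref{CoroMain}) sidestep all of this by bounding only the proximity $\lambda_{X,v}(P,x)$ to a single algebraic point rather than the full height, which is a much weaker left-hand side than the one in Conjecture \ref{ConjVojta}.
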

It should be noted that in the special case $X=\Pro^1$ and $D=[0]+[1]+[\infty]$ one recovers the $abc$ conjecture in its formulation \eqref{EqnLogABC}. For varieties $X$ with $X(\Q)$ Zariski dense, the only unconditional results available at this point are for $\Pro^1$; namely, exponential bounds for the $abc$ conjecture from \cite{ABC1,ABC2,ABC3,MurtyPasten}. Our Corollary \ref{CoroSubExpABC} provides a further result in dimension $1$ and, as we will see, Theorem \ref{ThmMain2}  gives an unconditional result for higher dimensional varieties.

\subsection{A special case of Vojta's conjecture} Given a smooth projective variety $X$ over $\Q$, an algebraic point $P\in X(\overline{\Q})$, and a place $v\in M_\Q$, we write $\lambda_{X,v}(P,x)$ for the $v$-adic proximity function to $P$; this is 
$$
\lambda_{X,v}(P,x)=\log^+ (d_{X,v}(P,x)^{-1})
$$
where $\log^+(t)=\log\max\{1,t\}$ and $d_{X,v}(-,-)$ is a $v$-adic distance function on $X(\overline{\Q_v})$, e.g. inherited from a projective embedding. The choice of $d_{X,v}$ only changes $\lambda_{X,v}(P,x)$ by adding a bounded function.

Given an ample line sheaf $\Acal$ on $X$, it is easy to see that there is a constant $M(\Acal,P)>0$ such that for all $x\in X(\Q)-\{P\}$ we have
$$
\lambda_{X,v}(P,x) < M(\Acal,P)\cdot h(\Acal,x) + O(1).
$$
 The optimal value of $M(\Acal,P)$ has been the subject of considerable attention in recent years since the work of McKinnon and Roth \cite{MR} linking it to Seshadri constants.

Thus we arrive at the following consequence of Vojta's Conjecture \ref{ConjVojta}.

\begin{conjecture} \label{ConjMain} Let $X$ be a smooth projective variety defined over $\Q$. Let $D$ be a reduced effective normal crossings divisor on $X$ defined over $\Q$. Let $\Acal=\Kcal_X\otimes \Ocal(D)$ and suppose that $\Acal$ is ample. Let let $P\in X(\overline{\Q})$ be an algebraic point. There is a constant $M$ such that for each $v\in M_\Q$ and every $\epsilon>0$ there is a proper Zariski closed set $Z\subseteq X$ with the following property: For all $x\in X(\Q)$ not in $Z$ we have
$$
\lambda_{X,v}(P,x) <  M\cdot N^{(1)}_X(D,x) + \epsilon \cdot h(\Acal,x)+ O(1)
$$
where the bounded error term $O(1)$ does not depend on $x$.
\end{conjecture}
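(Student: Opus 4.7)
My plan is to derive Conjecture~\ref{ConjMain} directly from Conjecture~\ref{ConjVojta} by combining Vojta's inequality with the general upper bound on the proximity function recalled immediately before the statement.

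First, I would set $M := M(\Acal, P)$, where $M(\Acal,P)$ is the constant furnished by the inequality
\[
\lambda_{X,v}(P, x) < M(\Acal, P) \cdot h(\Acal, x) + O(1)
\]
valid for all $x \in X(\Q) - \{P\}$. The key point is that $M$ depends only on $\Acal$ and $P$, not on $v$ or $\epsilon$, which matches the order of quantifiers demanded by the statement. Next, given $v \in M_\Q$ and $\epsilon > 0$, I would apply Conjecture~\ref{ConjVojta} to the data $X$, $D$, $\Acal$ with parameter $\delta := \epsilon/M$ (which we may harmlessly assume satisfies $\delta < 1$; the case of larger $\epsilon$ follows from the smaller-$\epsilon$ case). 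This produces a proper Zariski closed set $Z \subseteq X$, depending on the data and on $\delta$ (hence on $\epsilon$), such that for every $x \in X(\Q) \setminus Z$, using the identity $\Kcal_X \otimes \Ocal(D) = \Acal$,
\[
h(\Acal, x) < N^{(1)}_X(D, x) + \frac{\epsilon}{M} \cdot h(\Acal, x) + O_{\epsilon}(1).
\]
Multiplying through by $M$ and then combining with the proximity bound from the first step gives
\[
\lambda_{X,v}(P, x) < M \cdot N^{(1)}_X(D, x) + \epsilon \cdot h(\Acal, x) + O_{v,\epsilon}(1),
\]
which is exactly the conclusion of Conjecture~\ref{ConjMain}.

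There is no substantial obstacle here; this is essentially a bookkeeping argument. The only subtle point is the order of quantifiers: one must first fix $M = M(\Acal, P)$ using the Seshadri-type upper bound on $\lambda_{X,v}(P,\cdot)$, and only afterwards introduce the parameter $\delta = \epsilon / M$ when invoking Vojta's conjecture. The resulting Zariski closed exceptional set $Z$ is then allowed to depend on $\epsilon$ (through $\delta$), while $M$ remains independent of both $v$ and $\epsilon$, as the formulation requires.
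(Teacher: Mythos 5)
Your derivation is correct and is exactly the route the paper intends: Conjecture~\ref{ConjMain} is stated there as a consequence of Conjecture~\ref{ConjVojta} combined with the bound $\lambda_{X,v}(P,x) < M(\Acal,P)\cdot h(\Acal,x)+O(1)$, with $\Kcal_X\otimes\Ocal(D)=\Acal$. Your handling of the quantifiers (fixing $M=M(\Acal,P)$ first, then invoking Vojta with $\delta=\epsilon/M$ so that only $Z$ and the $O(1)$ depend on $v$ and $\epsilon$) matches the formulation, so nothing further is needed.
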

One can discuss what should be the optimal value of $M$ in the previous conjecture, but we will not pursue that direction here.

\subsection{Diophantine approximation in higher dimensions} 

Our main result is:
\begin{theorem} \label{ThmMain2} Let $X$ be a smooth projective variety over $\Q$ and let $D_1,...,D_m$ be different prime divisors on $X$ defined over $\Q$. Suppose that $D_1,...,D_m$ are linearly dependent modulo linear equivalence over $\Q$. Let $P\in X(\overline{\Q})$ be an algebraic point not in the support of $D=D_1+...+D_m$. There is a proper Zariski closed subset $Z\subseteq X$ such that for every given $\epsilon>0$ and each place $v\in M_\Q$, the following inequality holds  for all $x\in X(\Q)$ not in $Z$:
\begin{equation}\label{EqnMain2}
\lambda_{X,v}(P,x) < (\log^* h_X(\Ocal(D),x))\exp\left(\frac{(1+\epsilon)N^{(1)}_X}{\log^* N^{(1)}_X}\log^*_2 N^{(1)}_X + O(1)\right)
\end{equation}
where the bounded error term $O(1)$ does not depend on $x$, and $N^{(1)}_X= N_X^{(1)}(D,x)$. 
\end{theorem}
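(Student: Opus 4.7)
The strategy is to use the linear dependence of $D_1,\ldots,D_m$ modulo linear equivalence over $\Q$ to construct a rational map $f:X\dashrightarrow\Pro^1$ to which Theorem \ref{ThmMain1} can be applied at each rational point, and then transfer the one-dimensional estimate back to $X$. Choose integers $n_1,\ldots,n_m$, not all zero, and $f\in\Q(X)^\times$ with $\mathrm{div}(f)=\sum_i n_iD_i$. Set $\alpha:=f(P)$; it lies in $\overline{\Q}^\times$ because $P\notin\supp(D)\supseteq\supp(\mathrm{div}\, f)$. Define the exceptional set $Z\subset X$ to be a proper Zariski closed subset defined over $\Q$ containing (i) the indeterminacy locus of $f$, (ii) the zero locus of $\mathrm{Nm}_{\Q(\alpha)/\Q}(f-\alpha)$ (so that $f(x)\neq\alpha$ for $x\in X(\Q)-Z$), and (iii) those components of $f^{-1}(\alpha)$ not containing $P$.

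For $x\in X(\Q)-Z$, write $f(x)=b/c$ with $\gcd(b,c)=1$ and apply Theorem \ref{ThmMain1} to $\alpha$ and $f(x)$ with a small parameter $\epsilon'>0$, giving an upper bound on $-\log|\alpha-f(x)|_v$. Two comparisons reformulate it in $(X,D)$-invariants. Height: by functoriality $h(f(x))=h_X(\Ocal(\mathrm{div}_0 f),x)+O(1)$, and $\supp(\mathrm{div}_0 f)\subseteq\supp D$ yields $\log^*h(f(x))\le\log^*h_X(\Ocal(D),x)+O(1)$. Truncated counting: for a $\Q$-rational $x$ and a $\Q$-prime-divisor $D_i$ the Weil function $\lambda_{X,p}(D_i,x)$ is $\log p$ times a non-negative integer (up to a uniformly bounded error), so $\min\{\lambda_{X,p}(\cdot,x),\log p\}$ is insensitive to multiplicities, and
$$
N^{(1)}_X(D,x)=\sum_{p\,:\,x\text{ is }p\text{-adically close to }\supp D}\log p+O(1).
$$
Since every prime $p\mid bc$ forces $v_p(f(x))\neq 0$, hence $x$ close to $\supp(\mathrm{div}\, f)\subseteq\supp D$, we deduce $\log\rad(bc)\le N^{(1)}_X(D,x)+O(1)$, whence $\log^*_j\rad(bc)\le\log^*_{j-1}N^{(1)}_X(D,x)+O(1)$ for $j=2,3$. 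Substituting in Theorem \ref{ThmMain1} and taking $\epsilon'$ slightly smaller than $\epsilon$, the exponent rearranges as $\frac{(1+\epsilon)N^{(1)}_X}{\log^*N^{(1)}_X}\log^*_2N^{(1)}_X+O(1)$.

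It remains to pass from $\lambda_{\Pro^1,v}(\alpha,f(x))=-\log|\alpha-f(x)|_v$ to $\lambda_{X,v}(P,x)$. By functoriality, $\lambda_{\Pro^1,v}(\alpha,f(x))=\lambda_{X,v}(f^*[\alpha],x)+O(1)$; writing $f^*[\alpha]=A+R$ where $A$ collects the components through $P$ (with their multiplicities) and $\supp R\subseteq Z$, we have $\lambda_{X,v}(R,\cdot)$ bounded below on $X(\Q)-Z$, and $\lambda_{X,v}(P,x)\le\lambda_{X,v}(A,x)+O(1)$ since every component of $A$ contains $P$ with multiplicity $\ge 1$. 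Therefore $\lambda_{X,v}(P,x)\le\lambda_{\Pro^1,v}(\alpha,f(x))+O(1)$, which combined with the bound from the previous paragraph yields \eqref{EqnMain2}. The step I expect to be the main obstacle is the multiplicity-free comparison $\log\rad(bc)\le N^{(1)}_X(D,x)+O(1)$: a naive pullback would introduce a spurious factor $\max_i|n_i|$ in front of the leading term $N^{(1)}_X/\log^*N^{(1)}_X\cdot\log^*_2N^{(1)}_X$, which cannot be absorbed into the stated $(1+\epsilon)$; its absence relies precisely on the integrality of Weil functions at $\Q$-rational points evaluated on $\Q$-prime divisors.
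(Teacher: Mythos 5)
Your overall strategy coincides with the paper's: use the linear dependence to produce a non-constant $f:X\dasharrow\Pro^1$ over $\Q$ with $f^*([0]+[\infty])$ supported on $D$, set $\alpha=f(P)$, apply Theorem \ref{ThmMain1} to $f(x)$, and transfer the estimate back via functoriality together with the fact that $N^{(1)}$ only sees the support of an effective divisor (your ``integrality at $\Q$-points'' remark is exactly the paper's step (d), so the obstacle you flagged is handled correctly, and the spurious factor $\max_i|n_i|$ also stays harmless in the height term because it sits inside $\log^*$). The genuine gap is elsewhere: you invoke functoriality of heights and Weil functions for the \emph{rational map} $f$ with uniform $O(1)$'s, after merely putting the indeterminacy locus $E$ into the Zariski closed set $Z$. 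That is not enough: rational points $x\notin Z$ can still be $v$-adically close to $E$, and there the asserted identities fail by an unbounded amount. Concretely, if $\pi:\widetilde X\to X$ and $\widetilde f$ resolve $f$, then $\widetilde f^*[\alpha]=\pi^*(f^*[\alpha])-B$ with $B$ effective and supported over the base locus, so $\lambda_{X,v}(f^*[\alpha],x)=\lambda_{\Pro^1,v}(\alpha,f(x))+\lambda_{\widetilde X,v}(B,x')+O(1)$, and $\lambda_{\widetilde X,v}(B,x')$ is unbounded as $x$ approaches $E$. Your chain $\lambda_{X,v}(P,x)\le\lambda_{X,v}(f^*[\alpha],x)+O(1)\le\lambda_{\Pro^1,v}(\alpha,f(x))+O(1)$ uses precisely the direction that fails; likewise the claimed equality $h(f(x))=h_X(\Ocal(\mathrm{div}_0 f),x)+O(1)$ is false near the base locus (only the inequality $\le$ holds there, which fortunately is all you use).

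The gap is fixable, and the paper fixes it by resolving the indeterminacy: on $\widetilde X$ one has an honest morphism $\widetilde f$, one picks $M$ with $F_0+F_\infty\le M\cdot\pi^*D$, and one dominates $\lambda_{\widetilde X,v}(P',x')$ by the Weil function of $G=\widetilde f^*A$ (with $A$ the Galois-orbit divisor of $\alpha$, via Lemma \ref{LemmaProximity}), after which Lemma \ref{LemmaFunctoriality} applies to genuine morphisms with uniform error terms. Alternatively, your version can be salvaged by observing that $P\notin\supp(D)\supseteq E$, so $\lambda_{X,v}(P,x)$ is large only when $x$ lies in a fixed $v$-adic neighbourhood of $P$ bounded away from $E$; on that neighbourhood $f$ is a morphism and the comparisons are uniform, while outside it $\lambda_{X,v}(P,x)=O(1)$ and \eqref{EqnMain2} holds after enlarging its $O(1)$. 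Finally, add $\supp(D)$ to $Z$: you need $x\notin\supp(\mathrm{div} f)$ to guarantee $f(x)\in\Q^\times-\{\alpha\}$ and to have $N^{(1)}_X(D,x)$ defined at all.
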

We remark that by Hilbert's theorem 90, a divisor defined over $\Q$ is linearly equivalent to $0$ over $\Q$ if and only if it is linearly equivalent to $0$ over $\C$.

\subsection{The linear dependence hypothesis} Naturally, one wants to know when is the linear dependence condition on $D_1,...,D_m$ satisfied in order to be able to apply Theorem \ref{ThmMain2}. It turns out that it is always satisfied provided that $m$ is larger than a constant that only depends on $X$ over $\Q$. To state the result, let us write $\rho(D_1,...,D_m)$ for the rank of the group generated by $D_1,...,D_m$ modulo numerical equivalence and let $\rho_X$ be the Picard rank of $X$. We write $\Pic^0(X/\Q)$ for the group of linear equivalence classes of divisors over $\Q$ which are algebraically equivalent to $0$; this is a finitely generated abelian group (see Paragraph \ref{SecDiv} for details). 
\begin{theorem}\label{ThmRat} Let $X$ be a smooth projective variety over $\Q$ and let $D_1,...,D_m$ be distinct prime divisors on $X$ defined over $\Q$. If
$$
m> \rk \Pic^0(X/\Q) + \rho(D_1,...,D_m)
$$
then the divisors $D_1,...,D_m$ are linearly dependent modulo linear equivalence over $\Q$. In particular, this is the case if $m> \rk \Pic^0(X/\Q)+\rho_X$. 
\end{theorem}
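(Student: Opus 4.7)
The plan is to bound the rank of the subgroup of $\Pic(X/\Q)$ generated by the classes $[D_1],\dots,[D_m]$ using the standard decomposition of the Picard group as an extension of the Néron--Severi part by the algebraically trivial part, and then compare with $\rho(D_1,\dots,D_m)$ modulo the torsion difference between algebraic and numerical equivalence.

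More concretely, I would begin from the exact sequence
\[
0 \longrightarrow \Pic^0(X/\Q) \longrightarrow \Pic(X/\Q) \longrightarrow N \longrightarrow 0,
\]
where $N := \Pic(X/\Q)/\Pic^0(X/\Q)$ is the group of $\Q$-rational Néron--Severi classes. Let $V \subseteq \Pic(X/\Q)$ be the subgroup generated by $[D_1],\dots,[D_m]$, and let $\pi:V\to N$ be the projection. Then $\ker(\pi) = V \cap \Pic^0(X/\Q)$ has rank at most $\rk \Pic^0(X/\Q)$, so
\[
\rk V \;\leq\; \rk \Pic^0(X/\Q) + \rk \pi(V).
\]
Since the quotient of $N$ by numerical equivalence differs from $N$ only by a torsion subgroup (algebraic equivalence and numerical equivalence agree after tensoring with $\Q$ for smooth projective varieties), the image of $\pi(V)$ in $\Num(X/\Q)$ has the same rank as $\pi(V)$, namely $\rho(D_1,\dots,D_m)$. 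Hence
\[
\rk V \;\leq\; \rk \Pic^0(X/\Q) + \rho(D_1,\dots,D_m).
\]
If $m$ exceeds the right-hand side, then $[D_1],\dots,[D_m]$ cannot be $\Z$-linearly independent in $V$; clearing denominators if necessary, they are linearly dependent modulo linear equivalence over $\Q$, which is the desired conclusion.

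The final assertion is then immediate from $\rho(D_1,\dots,D_m) \leq \rho_X$, since $\rho(D_1,\dots,D_m)$ is the rank of a subgroup of $\Num(X)$ (or of $\Num(X/\Q)$, as one prefers), whose rank is $\rho_X$ by definition.

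I do not foresee a real obstacle; the only subtlety worth checking carefully is the identification of $N \otimes \Q$ with the image of $V$ in $\Num(X/\Q)\otimes \Q$, which boils down to the standard fact $\Pic^\tau/\Pic^0$ is finite so that numerical and algebraic equivalence coincide rationally. The finite generation of $\Pic^0(X/\Q)$ is invoked as in Paragraph \ref{SecDiv} and guarantees that $\rk\Pic^0(X/\Q)$ is a well-defined integer.
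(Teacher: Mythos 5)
Your argument is correct and is essentially the paper's proof: both bound the rank of the subgroup of $\Pic(X/\Q)$ generated by the classes via the exact sequence $0\to\Pic^0\to\Pic\to\NS\to 0$ together with the fact that $\NS\to\Num$ has torsion kernel, and then conclude by a rank count since a group of rank less than $m$ cannot contain $m$ independent elements. The only cosmetic difference is that you work with the quotient $\Pic(X/\Q)/\Pic^0(X/\Q)$ directly, while the paper passes to $\NS(X_\C)$; the substance is identical.
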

Regarding Theorem \ref{ThmRat}, in applications it might be useful to note that if the irregularity $q(X)=\dim H^0(X,\Omega^1_X)$ vanishes, then $\Pic^0(X/\Q)=(0)$, in which case $m>\rho_X$ suffices.

\subsection{Comparison with Conjecture \ref{ConjMain}} In order to clarify the comparison between our results and Conjecture \ref{ConjMain}, from Theorems \ref{ThmMain2} and \ref{ThmRat} we will deduce:

\begin{corollary}\label{CoroMain} Let $X$ be a smooth projective variety over $\Q$ and let $D_1,...,D_m$ be different prime divisors on $X$ defined over $\Q$. Suppose that $D_1,...,D_m$ are linearly dependent modulo linear equivalence (which is the case, for instance, if $m> \rk \Pic^0(X/\Q) + \rho_X$). Let $P\in X(\overline{\Q})$ be an algebraic point not in the support of $D=D_1+...+D_m$. There is a proper Zariski closed subset $Z\subseteq X$ such that for every given $\epsilon>0$ and every $v\in M_\Q$, the following inequality holds  for all $x\in X(\Q)$ not in $Z$:
\begin{equation}\label{EqnCoroMain}
\lambda_{X,v}(P,x) < \exp\left(\epsilon\cdot  N_X^{(1)}(D,x) \right) +(\log^* h_X(\Ocal(D),x))^{1+\epsilon} + O(1)
\end{equation}
where the bounded error term $O(1)$ does not depend on $x$. 
\end{corollary}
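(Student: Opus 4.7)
The plan is to apply Theorem \ref{ThmMain2} and then repackage the resulting inequality. Theorem \ref{ThmRat} intervenes only to justify the parenthetical statement that $m > \rk\Pic^0(X/\Q) + \rho_X$ already forces the linear-dependence hypothesis. Thus, writing $L := \log^* h_X(\Ocal(D),x) \ge 1$ and $N := N^{(1)}_X(D,x) \ge 0$, Theorem \ref{ThmMain2} provides a proper Zariski closed $Z \subseteq X$ (which I take to be the $Z$ of the Corollary) such that for any auxiliary $\delta > 0$ there is a constant $C = C(\delta)$, independent of $x$, with
$$
\lambda_{X,v}(P,x) < L \cdot \exp\!\left(\frac{(1+\delta)\,N \log_2^* N}{\log^* N} + C\right)
$$
for every $v \in M_\Q$ and every $x \in X(\Q)\setminus Z$. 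The value of $\delta$ will be fixed at the end, depending on the $\epsilon$ of the Corollary.

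The main analytic step is the elementary splitting inequality: for any $a,b \ge 1$ and any $\epsilon > 0$,
$$
ab \le a^{1+\epsilon} + b^{1+1/\epsilon};
$$
indeed, if $b \le a^{\epsilon}$ then the first summand bounds $ab$, while otherwise $a < b^{1/\epsilon}$ and the second summand does. Applying this with $a = L$ and $b = \exp(g(N))$, where $g(N) := \frac{(1+\delta) N \log_2^* N}{\log^* N} + C$, yields
$$
\lambda_{X,v}(P,x) \le L^{1+\epsilon} + \exp\!\bigl((1+1/\epsilon)\,g(N)\bigr).
$$

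It remains to bound the second summand by $\exp(\epsilon N) + O(1)$. Since $\log_2^* N / \log^* N \to 0$ as $N \to \infty$, I choose $\delta > 0$ small enough (as a function of $\epsilon$) so that $(1+1/\epsilon)(1+\delta)\,\log_2^* N / \log^* N < \epsilon$ for all $N \ge N_0$ with $N_0 = N_0(\epsilon)$; enlarging $N_0$ if necessary also absorbs the constant $(1+1/\epsilon)C$, giving $\exp((1+1/\epsilon)g(N)) \le \exp(\epsilon N)$ in that range. For $N < N_0$ the exponent $(1+1/\epsilon)g(N)$ is uniformly bounded, so the exponential is $O(1)$. Combining the two regimes produces \eqref{EqnCoroMain}. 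The argument is essentially bookkeeping once Theorem \ref{ThmMain2} is available; the only subtlety worth highlighting is the use of the $ab$-inequality to cleanly separate the $L$-dominated regime (contributing the $L^{1+\epsilon}$ term) from the $N$-dominated regime (contributing the $\exp(\epsilon N)$ term).
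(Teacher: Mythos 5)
Your proof is correct and follows essentially the same route as the paper: invoke Theorem \ref{ThmMain2} (with Theorem \ref{ThmRat} only for the parenthetical hypothesis), then use the elementary splitting inequality $ab \le a^{1+\epsilon} + b^{1+1/\epsilon}$ to separate the $(\log^* h_X(\Ocal(D),x))^{1+\epsilon}$ term, and absorb the remaining exponent into $\exp(\epsilon N^{(1)}_X(D,x)) + O(1)$ by choosing the auxiliary $\delta$ small and splitting into large and small $N$. The paper compresses this last step into a second application of the same splitting inequality, but the content is the same.
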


We note that 
$$
(\log^* h_X(\Ocal(D),x))^{1+\epsilon}< \epsilon \cdot h_X(\Ocal(D),x) + O_\epsilon(1)
$$ 
in agreement with Conjecture \ref{ConjMain}.

\subsection{A conjectural improvement}

Lang and Waldschmidt  \cite{Lang, Waldschmidt} have proposed a conjectural improvement on the existing lower bounds for non-vanishing linear forms in logarithms, see Conjecture \ref{ConjLW} for the precise statement. We will show that the Lang-Waldschmidt conjecture implies a version of Conjecture \ref{ConjMain}.

\begin{theorem}\label{ThmLW} Assume the Lang-Waldschmidt conjecture (cf. Conjecture \ref{ConjLW}). Let $X$ be a smooth projective variety over $\Q$ and let $D_1,...,D_m$ be different prime divisors on $X$ defined over $\Q$. Suppose that $D_1,...,D_m$ are linearly dependent modulo linear equivalence (which is the case, for instance, if $m> \rk \Pic^0(X/\Q) + \rho_X$). Let $P\in X(\Q)$ be a rational point not in the support of $D=D_1+...+D_m$. There is a proper Zariski closed subset $Z\subseteq X$ such that for every given $\epsilon>0$  the following inequality holds  for all $x\in X(\Q)$ not in $Z$:
$$
\lambda_{X,\infty}(P,x) < (1+\epsilon)  N_X^{(1)}(D,x)  +\epsilon\cdot h_X(\Ocal(D),x) + O(1)
$$
where the bounded error term $O(1)$ does not depend on $x$. 
\end{theorem}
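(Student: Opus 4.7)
The plan is to re-run the argument of Theorem \ref{ThmMain2} verbatim on the geometric side, but to replace its arithmetic input, namely Theorem \ref{ThmMain1} (which rests on Matveev--Yu via Evertse--Gy\"ory), by the sharper bound that the Lang--Waldschmidt Conjecture \ref{ConjLW} provides for linear forms in logarithms. The rationality hypothesis $P\in X(\Q)$ is crucial here: it forces the algebraic number produced by the reduction to one variable to be rational, so the linear form to which Lang--Waldschmidt is applied involves only logarithms of rational primes, whose heights sum to exactly the truncated counting function.

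In more detail, the linear dependence of $D_1,\ldots,D_m$ modulo linear equivalence over $\Q$ yields integers $a_1,\ldots,a_m$, not all zero, and a rational function $f\in\Q(X)^\times$ with $\mathrm{div}(f)=\sum_{i=1}^m a_i D_i$. Let $Z_0\subseteq X$ be a proper Zariski closed subset absorbing the indeterminacy locus of $f$, the support of $D$, the components of $f^{-1}(f(P))$ other than those through $P$, and the exceptional loci introduced in the geometric reduction step of Theorem \ref{ThmMain2}. For $x\in X(\Q)\setminus Z_0$, writing $f(x)=b/c$ in lowest terms, the divisorial identity together with the definitions of the Weil, height and truncated counting functions yields
$$
\log\mathrm{rad}(bc)\le N^{(1)}_X(D,x)+O(1),\quad \log\max\{|b|,|c|\}\le C_1\cdot h_X(\Ocal(D),x)+O(1),
$$
and, after the same localization step used in Theorem \ref{ThmMain2},
$$
\lambda_{X,\infty}(P,x)\le -\log|\alpha-f(x)|_\infty+O(1),
$$
where $\alpha=f(P)\in\Q^\times$ is a fixed nonzero rational.

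We may assume $f(x)$ is close enough to $\alpha$ that $|\alpha-f(x)|_\infty\asymp |\alpha|\cdot|\log(f(x)/\alpha)|$, since otherwise the left side of the target inequality is already bounded. Writing $f(x)/\alpha=u\prod_p p^{e_p}$ with $u\in\{\pm 1\}$ and the product taken over the finite set of rational primes dividing $bc$ together with those dividing the fixed $\alpha$, we obtain
$$
\log(f(x)/\alpha)=\sum_p e_p\log p\; (+\,i\pi \text{ if } u=-1),
$$
a linear form in logarithms of rational primes with integer coefficients of absolute value at most $B=\max_p|e_p|$, and $\log B\le h_X(\Ocal(D),x)+O(1)$. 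Applying Conjecture \ref{ConjLW} to this form produces a lower bound on $|\log(f(x)/\alpha)|$ in which each prime $p$ contributes through its height $h(p)=\log p$; since $\sum_p\log p=\log\mathrm{rad}(bc)+O(1)\le N^{(1)}_X(D,x)+O(1)$ and the coefficient size $B$ enters only to the power $\epsilon$, one concludes
$$
-\log|\alpha-f(x)|_\infty<(1+\epsilon)N^{(1)}_X(D,x)+\epsilon\cdot h_X(\Ocal(D),x)+O(1),
$$
which is the desired estimate.

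The main obstacle is not geometric but arithmetic bookkeeping: one must verify that the combination of heights of algebraic numbers appearing on the right-hand side of Conjecture \ref{ConjLW} assembles into exactly $(1+\epsilon)N^{(1)}_X(D,x)$ rather than a larger expression, and that the dependence on the coefficient size $B$ is indeed absorbable into $\epsilon\cdot h_X(\Ocal(D),x)$ for the given $\epsilon$. Once this is carried out cleanly and the proper Zariski closed set $Z\supseteq Z_0$ is enlarged to accommodate all auxiliary steps, the theorem follows.
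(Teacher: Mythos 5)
Your route is the paper's route: produce $f:X\dasharrow\Pro^1$ from the linear dependence, exploit $P\in X(\Q)$ so that $\alpha=f(P)$ is a nonzero rational (the paper simply rescales $f$ so that $f(P)=1$; dividing by $\alpha$ as you do is an inessential variant), run the geometric bookkeeping of Theorem \ref{ThmMain2}, and feed Conjecture \ref{ConjLW} into the one-variable approximation step instead of Theorem \ref{ThmMain1}. The gap is precisely at the step you defer "to verify", which is the actual content of the paper's Lemma \ref{LemmaLW}, and your stated reason for why it works misreads the conjecture. Conjecture \ref{ConjLW} does \emph{not} let the coefficient size enter "only to the power $\epsilon$": its denominator is $|b_1\cdots b_n a_1\cdots a_n|^{1+\epsilon}$, so after taking logarithms you pick up $(1+\epsilon)\sum_j\log|b_j|$, the sum over \emph{all} exponents, not $\epsilon\log\max_j|b_j|$. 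Bounding this crudely (e.g. $\log|b_j|\le\log(2h)$ for each of the $n$ coefficients) gives a term of size $n\log^* h(f(x))$, which is not visibly $\le\epsilon\, h_X(\Ocal(D),x)+(\mbox{small})\cdot N^{(1)}_X(D,x)$. What makes the absorption work is the elementary divisor-type estimate $\prod_j|b_j|\le\prod_j(1+|b_j|)\ll_\delta\bigl(\prod_j p_j^{|b_j|}\bigr)^{\delta}\le e^{2\delta h(f(x))+O_\delta(1)}$, combined (if a per-prime constant survives) with $n=\omega(Q)\ll_\eta\eta\log Q+O_\eta(1)$ and $\log Q=N^{(1)}_{\Pro^1}([0]+[\infty],f(x))+O(1)$; this is exactly how Lemma \ref{LemmaLW} converts the Lang--Waldschmidt denominator into $(1+\epsilon)N^{(1)}+\epsilon h$. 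Your sketch asserts the conclusion of this computation without performing it, and with an incorrect description of how the coefficients enter, so the arithmetic heart of the proof is missing.

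A second, smaller issue is your exceptional set: you deliberately keep the components of $f^{-1}(f(P))$ through $P$ outside $Z_0$. But on those components $f(x)=\alpha$ identically, so $f(x)/\alpha=1$ and Conjecture \ref{ConjLW} says nothing, while the chain $\lambda_{X,\infty}(P,x)\le -\log|\alpha-f(x)|+O(1)$ degenerates; your argument therefore does not establish the inequality for rational points on those components. The statement only asks for \emph{some} proper Zariski closed $Z$, so the correct (and the paper's) choice is to throw the entire image of $\supp(\widetilde{f}^*A)$ --- i.e. the closure of the whole fibre over $\alpha$, components through $P$ included --- into $Z$, together with $\supp(D)$; with that fix, and with the Lemma \ref{LemmaLW}-style computation supplied, your argument coincides with the paper's proof.
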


\subsection{About the proofs} As mentioned before, the main Diophantine approximation input in our arguments comes from the theory of linear forms in logarithms, more precisely, results of  Matveev \cite{Matveev} and Yu \cite{Yu}. From these results we need a consequence due to Evertse and Gy\"ory, namely, Theorem 4.2.1 in \cite{EGbook} (which also needs some new results from geometry of numbers). This will allow us to prove Theorem \ref{ThmMain1} in Section \ref{SecMain1}. An earlier version of this manuscript directly used Matveev's theorem in the archimedian case, and similarly one can use results of Yu in the non-archimedian case, but the theorem of Evertse and Gy\"ory makes the argument go in a smoother way.

Theorem \ref{ThmMain2} is proved in Section \ref{SecMain2}. It is deduced from Theorem \ref{ThmMain1} via a geometric construction.  Theorem \ref{ThmRat} is also proved in Section \ref{SecMain2}. The numerical condition $m>\rk \Pic^0(X/\Q)+\rho(D_1,...,D_m)$ in Theorem \ref{ThmRat}  appears from a variation of a method of Vojta for producing suitable rational maps, see Chapter 2 in \cite{VojtaThesis}. 

Finally, the discussion on the Lang-Waldschmidt conjecture is included in Section \ref{SecLW}.

Let us remark that other applications of linear forms in logarithms to higher dimensional varieties are available in the literature, in the study of integral points. We refer the reader to the work of Levin \cite{Levin} and Le Fourn \cite{leFourn}.


\section{Preliminaries}


\subsection{More on Diophantine approximation functions} A reference for this paragraph is \cite{VojtaCIME}. 

A first fact that we will need is  functoriality of heights, Weil functions, and truncated counting functions.
\begin{lemma} \label{LemmaFunctoriality} Let $f:X\to Y$ be a morphism of smooth projective varieties over $\Q$.
\begin{itemize} 
\item[(i)] Let $\Lcal$ be a line sheaf on $Y$. For all $x\in X(\Q)$ we have
$$
h_X(f^*\Lcal,x)=h_Y(\Lcal, f(x)) + O(1). 
$$
\item[(ii)] Let $D$ be a divisor on $Y$ defined over $\Q$ and let $v\in M_\Q$. Suppose that $f(X)$ is not contained in $\supp(D)$. Then, for all $x\in X(\Q)$ not in $\supp(f^*D)$ we have
$$
\lambda_{X,v}(f^*D,x)=\lambda_{Y,v}(D,f(x)) + O(1).
$$
\item[(iii)] Let $D$ be an effective divisor on $Y$ defined over $\Q$. Suppose that $f(X)$ is not contained in $\supp(D)$. Then, for all $x\in X(\Q)$ not in $\supp(f^*D)$ we have
$$
N^{(1)}_X(f^*D,x)=N^{(1)}_X(D,f(x)) + O(1).
$$
\end{itemize}
\end{lemma}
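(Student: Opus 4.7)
The plan is to handle the three items in order, since (iii) will build on (ii) and all three are standard consequences of the way heights, Weil functions, and counting functions are defined via auxiliary data (projective embeddings, local equations, integral models). The common thread is that pulling back along $f$ is compatible with these defining data, so the asserted identities hold up to bounded error terms that only record the ambiguity in the choices.

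For part (i), I would first reduce to the case of a very ample $\Lcal$ by writing an arbitrary line sheaf as $\Lcal_1\otimes\Lcal_2^{\vee}$ with both factors very ample and invoking additivity of both $h(\cdot,x)$ and pullback up to $O(1)$. In the very ample case, a basis of global sections of $\Lcal$ gives a closed immersion $\phi\colon Y\hookrightarrow \Pro^N$ with $\phi^*\Ocal(1)\cong \Lcal$, so $h_Y(\Lcal,y)=h_{\Pro^N}(\Ocal(1),\phi(y))+O(1)$. The composition $\phi\circ f$ is the morphism to $\Pro^N$ attached to the pulled-back linear system, so $h_X(f^*\Lcal,x)=h_{\Pro^N}(\Ocal(1),(\phi\circ f)(x))+O(1)$, and combining these two identities yields (i).

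For part (ii), I would use the local-equation description of Weil functions. Cover $Y$ by finitely many affine open sets $U_i$ on which $D$ admits a local defining equation $g_i\in \Q(Y)^{\times}$, so that the function $y\mapsto \lambda_{Y,v}(D,y)+\log|g_i(y)|_v$ extends to a continuous function on $U_i(\Q_v)$. The pullbacks $\{f^{-1}(U_i)\}$ cover $X$ (since $f(X)\not\subset \supp(D)$, each $g_i\circ f$ is a nonzero rational function giving a local equation for $f^*D$ on $f^{-1}(U_i)$), so $x\mapsto \lambda_{X,v}(f^*D,x)+\log|g_i(f(x))|_v$ is continuous on $f^{-1}(U_i)(\Q_v)$. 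Subtracting shows that the difference $\lambda_{X,v}(f^*D,x)-\lambda_{Y,v}(D,f(x))$ extends continuously across $\supp(f^*D)$, and since $X(\Q_v)$ is compact (as $X$ is projective), the difference is bounded.

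Part (iii) is where the main subtlety lies, because the $O(1)$ produced by (ii) a priori depends on $v$, whereas the definition of $N^{(1)}_X$ sums contributions over all finite places simultaneously. To fix this, I would choose integral models $\Xcal,\Ycal$ over $\Spec\Z$ of $X,Y$ together with an extension $f_{\Zcal}\colon\Xcal\to\Ycal$ of $f$ and a closed subscheme $\Dcal\subset \Ycal$ extending $D$; using these models to normalize the Weil functions, one obtains an identity $\lambda_{X,p}(f^*D,x)=\lambda_{Y,p}(D,f(x))$ for every prime $p$ outside a finite exceptional set $S$, with a uniform bound on the difference for $p\in S$. The elementary inequality $|\min\{a,\log p\}-\min\{b,\log p\}|\le |a-b|$ then shows that the truncated sums differ only by a bounded contribution coming from $S$, which yields (iii). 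The main obstacle is precisely upgrading (ii) to this $M_\Q$-uniform form; this is routine but requires the integral-model viewpoint rather than the purely $v$-adic compactness argument of (ii).
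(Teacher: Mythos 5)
Your proof is correct. The paper does not actually prove Lemma \ref{LemmaFunctoriality}; it records it as a standard fact with a pointer to \cite{VojtaCIME}, and your argument (reduction to the very ample case via projective embeddings for heights, local defining equations plus $v$-adic compactness for Weil functions, and integral models to obtain an $M_\Q$-uniform comparison before truncating) is exactly the standard treatment found there. The only cosmetic point is that the extension $f_{\Zcal}\colon\Xcal\to\Ycal$ need only exist over the complement of finitely many primes of $\Spec\Z$, which you can absorb into the exceptional set $S$ without affecting the argument.
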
 
We will also need a comparison between the proximity to an algebraic point and the archimedian Weil function relative to a divisor.

\begin{lemma} \label{LemmaProximity} Let $X$ be a smooth projective variety defined over $\Q$, let $D$ be an effective divisor on $X$ defined over $\Q$, and let $P\in X(\overline{\Q})$ be an algebraic point in $\supp(D)$ and let $v\in M_\Q$. Then, for all $x\in X(\Q)$ not in $\supp(D)$ we have
$$
\lambda_{X,v} (P,x)\le \lambda_{X,v}(D,x)+O(1).
$$
\end{lemma}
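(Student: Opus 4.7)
The plan is to exploit the fact that the inequality is essentially a local statement at $P$ in the $v$-adic topology. For $x$ outside any fixed $v$-adic neighborhood of $P$, the distance $d_{X,v}(P,x)$ is bounded below, so $\lambda_{X,v}(P,x)$ is bounded above by a constant; meanwhile $\lambda_{X,v}(D,x)$ is globally bounded below by a standard property of Weil functions of effective divisors. Thus the inequality holds trivially, up to $O(1)$, away from $P$, and one only has to deal with $x$ lying in a small $v$-adic neighborhood of $P$.

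In such a neighborhood, I would first reduce to the case where $D$ is a prime divisor $D_0$ containing $P$. Writing $D=\sum n_i D_i$ with $n_i\ge 1$ and choosing some $D_0=D_{i_0}$ containing $P$ (which exists because $P\in\supp(D)$), additivity of Weil functions together with the fact that each $\lambda_{X,v}(D_i,-)$ is globally bounded below gives $\lambda_{X,v}(D,x)\ge \lambda_{X,v}(D_0,x)+O(1)$ on all of $X(\Q)\setminus\supp(D)$. In an affine Zariski open neighborhood $U$ of $P$ the Cartier divisor $D_0$ is cut out by a single regular function $f$ with $f(P)=0$, and there $\lambda_{X,v}(D_0,x)=-\log|f(x)|_v+O(1)$. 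After extending scalars to a number field $K\subseteq\overline{\Q_v}$ containing the coordinates of $P$ and choosing analytic coordinates centered at $P$, a direct $v$-adic analytic estimate yields the Lipschitz-type bound $|f(x)|_v\le C\cdot d_{X,v}(P,x)$ in a neighborhood of $P$: by the mean value theorem if $v$ is archimedean, or by expanding $f$ as a power series and applying the ultrametric inequality if $v$ is non-archimedean. Combining these gives $\lambda_{X,v}(D,x)\ge -\log d_{X,v}(P,x)+O(1)=\lambda_{X,v}(P,x)+O(1)$ in this neighborhood.

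The main (mild) obstacle is the Lipschitz estimate $|f(x)|_v\le C\cdot d_{X,v}(P,x)$: while routine, it requires some care because $P$ is algebraic rather than $\Q$-rational, so one must pass to the $v$-adic completion of a number field containing the coordinates of $P$ and do the analysis there. Alternatively, one can bypass any explicit local computation by appealing directly to Vojta's formalism of Weil functions for arbitrary closed subschemes (as described in \cite{VojtaCIME}): for closed subschemes $Y_2\subseteq Y_1$ of $X$ one has $\lambda_{X,v}(Y_1,x)\ge \lambda_{X,v}(Y_2,x)+O(1)$, which applied to $\{P\}\subseteq \supp(D)$ (together with $\lambda_{X,v}(\supp(D),-)\le \lambda_{X,v}(D,-)+O(1)$, since $D$ is effective with underlying set $\supp(D)$) yields the lemma immediately.
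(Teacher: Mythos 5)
Your proposal is correct and in substance matches the paper's (very short) proof: the paper simply notes that since $D$ is effective its Weil function dominates $-\log d_{X,v}(\supp(D),x)+O(1)$ near $\supp(D)$, and that $d_{X,v}(\supp(D),x)\le d_{X,v}(P,x)$ because $P\in\supp(D)$ --- exactly the standard functoriality fact you invoke in your closing alternative. Your longer route (reduction to a component through $P$, local equation $f$, and the Lipschitz bound $|f(x)|_v\le C\,d_{X,v}(P,x)$) just verifies that standard fact by hand and is fine.
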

\begin{proof} The $v$-adic distance from $x$ to $\supp(D)$ is at most the $v$-adic distance from $x$ to $P$, and since $D$ is effective, the $v$-adic Weil function relative to $D$ is at least $-\log d_{X,v}(\supp (D),x) + O(1)$ for $x$ near $\supp(D)$. 
\end{proof}


\subsection{Divisors} \label{SecDiv} For the convenience of the reader, in this section we recall several standard facts about divisors on a smooth projective variety.

Let $Y$ be a smooth projective variety over $\C$. We write $\NS(Y)$ for the group of divisors modulo algebraic equivalence and $\Num(Y)$ for the group of divisors modulo numerical equivalence. We have a surjection $\NS(Y)\to \Num(Y)$. Severi's basis theorem asserts that $\NS(Y)$ is finitely generated, and the kernel of the surjection $\NS(Y)\to \Num(Y)$ is $\NS(Y)_{tor}$, the torsion part of $\NS(Y)$. The Picard rank of $Y$ is $\rho_Y=\rk \NS(Y)=\rk \Num(Y)$.

The group $\Pic(Y/\C)=H^1(Y,\Ocal_Y^\times)$ classifies line sheaves modulo isomorphism, or equivalently, divisors modulo linear equivalence. The group $\Pic^0(Y/\C)$ is defined by the exact sequence
$$
0\to \Pic^0(Y/\C)\to \Pic(Y/\C)\to \NS(Y)\to 0.
$$
That is, $\Pic^0(Y/\C)$ classifies linear equivalence classes of divisors which are algebraically equivalent to $0$. The group $\Pic^0(Y/\C)$ has a natural structure of abelian variety over $\C$ of dimension equal to the irregularity $q(Y)=\dim H^0(Y,\Omega^1_Y)$.

Let $X$ be a smooth projective variety defined over $\Q$. Let us write $\Pic^0(X/\Q)$ for the group of divisors defined over $\Q$ modulo linear equivalence over $\Q$ or over $\C$, which is the same by Hilbert's theorem 90. Let us write $\Pic^0(X/\Q)$ for the subgroup of $\Pic(X/\Q)$ of classes  which are algebraically equivalent to $0$.

Let $Y=X_\C$. There is an abelian variety $P^0_{X/\Q}$ defined over $\Q$, the connected component of the identity of the Picard variety of $X$, satisfying that $\Pic^0(X/\Q)$ is a subgroup of $P^0_{X/\Q}(\Q)$, and $P^0_{X/\Q}(\C)=\Pic^0(Y/\C)$. In particular, the group $\Pic^0(X/\Q)$ is finitely generated by the Mordell-Weil theorem applied to $P^0_{X/\Q}(\Q)$. When $X(\Q)\ne \emptyset$ (or more generally, when $X$ is everywhere locally soluble) we have $P^0_{X/\Q}(\Q)=\Pic^0(X/\Q)$.


\subsection{Linear forms in logarithms and approximation}  Let $h:\overline{\Q}\to \R$ be the absolute logarithmic height, cf. \cite{VojtaCIME}. In the case of $x=b/c\in \Q$ it agrees with the more elementary height $h(x)=\log\max\{|b|,|c|\}$ for $b,c$ coprime integers.

The following result follows from Theorem 4.2.1 in \cite{EGbook}, due to Evertse and Gy\"ory. It follows from bounds for non-vanishing linear forms in logarithms due to Matveev \cite{Matveev} in the archimedian case and Yu \cite{Yu} in the $p$-adic case, combined with some additional results from geometry of numbers. We simply state it in a special case, which is all we need.

\begin{lemma}\label{LemmaEG} Let $\Gamma\subseteq \Q^\times$ be a finitely generated multiplicative group with $q_1,...,q_n\in \Gamma$ generators of $\Gamma/\Gamma_{tor}$. Let $\alpha\in \overline{\Q}^\times$ be an algebraic number and let $d$ be its degree. Let $v\in M_\Q$. There is a number $\kappa_0(\alpha,v)>0$ effectively depending on $\alpha$ and $v$ such that for all $x\in \Gamma$ with $x\ne \alpha$ we have
$$
-\log |\alpha- x|_v < \kappa_0(\alpha,v)\cdot n(\log^*n)(16ed)^{3n} (\log^*h(x))\prod_{j=1}^n h(q_j).
$$ 
\end{lemma}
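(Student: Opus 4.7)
The plan is to deduce Lemma~\ref{LemmaEG} as a direct specialization of Theorem 4.2.1 of \cite{EGbook}. That result, due to Evertse and Gy\"ory, establishes an effective upper bound for $-\log|\alpha - x|_v$ when $x$ ranges over a finitely generated multiplicative subgroup $\Gamma'$ of $K^\times$ for a number field $K$, with explicit dependence on $[K(\alpha):K]$, on the rank $n$ of the free part of $\Gamma'$, on the heights $h(q_j)$ of chosen generators, and on $h(x)$. Internally, its proof combines Matveev's lower bound for linear forms in logarithms in the archimedean case, Yu's analogue in the $p$-adic case, and additional results from the geometry of numbers; for our purposes we invoke it as a black box.

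I would then specialize by taking $K = \Q$ (so that $[K:\Q]=1$ and $[K(\alpha):K] = d$) and choosing as generators of $\Gamma/\Gamma_{tor}$ exactly the given $q_1,\dots,q_n$. Reading off the Evertse-Gy\"ory bound in this special case yields
\[
-\log|\alpha - x|_v < \kappa_0(\alpha, v)\cdot n(\log^* n)(16ed)^{3n}(\log^* h(x))\prod_{j=1}^n h(q_j),
\]
where the constant $\kappa_0(\alpha, v)$ absorbs every effectively bounded quantity that depends only on $\alpha$ (its absolute logarithmic height, its discriminant, etc.) and on the place $v$ (the residue characteristic if $v$ is finite, plus the normalization of $|\cdot|_v$). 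The dependence on the degree $d$ itself is displayed explicitly in the factor $(16ed)^{3n}$, and the dependence on the generators is displayed in the factor $\prod h(q_j)$, in accord with the statement of the lemma.

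The only concrete step requiring care is the notational alignment between the present statement and the formulation in \cite{EGbook}: the height-like quantity appearing on the right-hand side of their theorem must be identified with $\log^* h(x)$ in the case $K = \Q$, using that $\Gamma_{tor} \subseteq \{\pm 1\}$ so that torsion contributes zero to $h$. Once that identification is made, the lemma follows with no further Diophantine input. The main obstacle, should one wish to avoid citing \cite{EGbook}, would be to redo the argument directly from Matveev and Yu, which is feasible but requires extra bookkeeping, especially in the non-archimedean case, as the author notes in the introduction.
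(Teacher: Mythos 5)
Your proposal matches the paper's own treatment: both simply specialize Theorem 4.2.1 of \cite{EGbook} to $K=\Q$ with the given generators $q_1,\dots,q_n$ and absorb all quantities depending only on $\alpha$ and $v$ into $\kappa_0(\alpha,v)$. The one detail the paper makes explicit, which your ``notational alignment'' step should include, is that the cited theorem bounds a term of the form $|1-\beta x|_v$ rather than $|\alpha-x|_v$; taking $\beta=1/\alpha$ gives $-\log|\alpha-x|_v=-\log|1-\beta x|_v-\log|\alpha|_v$, and the extra $\log|\alpha|_v$ is absorbed into $\kappa_0(\alpha,v)$.
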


Theorem 4.2.1 in \cite{EGbook} actually has a term of the form $|1-\beta x|_v$, but choosing $\beta=1/\alpha$ we have $-\log |\alpha- x|_v =-\log|1-\beta x|_v - \log |\alpha|_v$, and $\kappa_0(\alpha,v)$ can absorbe the term  $\log |\alpha|_v$. A precise (and not so complicated) value of $\kappa_0(\alpha,v)$ can be deduced from Theorem 4.2.1 in \cite{EGbook}.


\section{The case of the projective line}\label{SecMain1}


\subsection{Approximations and the radical}

\begin{proof}[Proof of Theorem \ref{ThmMain1}] Let $d$ be the degree of $\alpha$. Let $p_1,...,p_n$ be the prime numbers in the support of $x=b/c$, i.e. primes $p$ with $v_p(x)\ne 0$ where $v_p$ is the $p$-adic valuation. In other words, $p_1,...,p_n$ are the primes dividing $bc$. We may assume $n\ge 1$. Let $\Gamma$ be the multiplicative group generated by $p_1,...,p_n$ and $-1$. Then Lemma \ref{LemmaEG} gives
$$
-\log |\alpha- x|_v <  \kappa_0(\alpha,v)\cdot n(\log^*n)(16ed)^{3n} (\log^*h(x))\prod_{j=1}^n\log p_j.
$$

Using the arithmetic-geometric mean inequality on $\prod_j\log p_j$ and the elementary bound $$n(\log^*n)(16e)^{3n}<e^{12n},$$ we see that
$$
-\log |\alpha- x|_v < \kappa_1(\alpha,v) \cdot \log^*(h(x)) \cdot \exp\left((12+3\log d)n + n\cdot \log\frac{\log \rad(bc)}{n}\right) 
$$
where $\kappa_1(\alpha,v)$ is a number that only depends on $\alpha$ and $v$, in an effective way. 

Let $\epsilon>0$. From analytic number theory (cf. \cite{Robin} for instance) we have that there is an effective $\kappa_2(\epsilon)>0$ depending only on $\epsilon$ such that for all integers $m>\kappa_2(\epsilon)$ we have
$$
\omega(m) < \frac{(1+\epsilon)\log m}{\log\log m}
$$
where $\omega(m)$ is the number of different prime divisors of $m$. Also, note that the function $t\mapsto t\log(A/t)$ is increasing for $t<A/e$. Therefore, applying these observations with $m=\rad(bc)$ and $\omega(m)=n$, we see that there is an effective $\kappa_3(d,\epsilon)$ depending only on $d$ and $\epsilon$ such that
$$
(12+3\log d)n + n\cdot \log\frac{\log \rad(bc)}{n} < \frac{(1+\epsilon)\log \rad(bc)}{\log^*_2 \rad(bc)}\log^*_3\rad(bc) + \kappa_3(d,\epsilon).
$$
We deduce
$$
-\log |\alpha- x|_v < (\log^*h(x)) \cdot \exp\left(\frac{(1+\epsilon)\log \rad(bc)}{\log^*_2 \rad(bc)}\log^*_3\rad(bc) + \kappa_4(v,\alpha,\epsilon)\right)
$$
where $\kappa_4(v,\alpha,\epsilon)$ only depends on $v$, $\alpha$ and $\epsilon$, in an effective way.
\end{proof}


\subsection{Reformulation on $\Pro^1$} It will be convenient to reformulate Theorem \ref{ThmMain1} as an approximation statement on $\Pro^1$ relative to the divisor $D=[0]+[\infty]$. 

\begin{theorem}[Reformulation of Theorem \ref{ThmMain1}] \label{ThmMain1bis} Let $\alpha\in \Pro^1(\overline{\Q})-\{0,\infty\}$, let $v\in M_\Q$ and let $\epsilon>0$. There is a number $\kappa(v,\alpha,\epsilon)>0$ depending effectively on $v$, $\alpha$ and $\epsilon$ such that for all $x\in\Pro^1(\Q)-\{0,\alpha,\infty\}$ we have
$$
\lambda_{\Pro^1,v}(\alpha,x)< (\log^* h(x)) \exp\left(\frac{(1+\epsilon)N^{(1)}_{\Pro^1}}{\log^*N^{(1)}_{\Pro^1}}\log^*_2 N^{(1)}_{\Pro^1} + \kappa(v,\alpha,\epsilon)\right)
$$
where $N^{(1)}_{\Pro^1}=N^{(1)}_{\Pro^1}([0]+[\infty],x)$.
\end{theorem}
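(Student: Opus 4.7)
The plan is to derive Theorem \ref{ThmMain1bis} as a direct translation of Theorem \ref{ThmMain1} using the standard correspondences between Weil functions, truncated counting functions on $\Pro^1$, and the elementary quantities on $\Q^\times$. Given $x = b/c \in \Pro^1(\Q) - \{0, \alpha, \infty\}$ with $b, c$ coprime integers, two basic identifications are required. First, for the Weil function relative to the point $\alpha \neq 0, \infty$, one has $\lambda_{\Pro^1, v}(\alpha, x) = \log^+ |\alpha - x|_v^{-1} + O_{\alpha, v}(1)$, which yields $\lambda_{\Pro^1, v}(\alpha, x) \leq \max\{0, -\log|\alpha - x|_v\} + O(1)$. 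Second, as recalled in the introduction, $N^{(1)}_{\Pro^1}([0], x) = \log\rad(b) + O(1)$ and $N^{(1)}_{\Pro^1}([\infty], x) = \log\rad(c) + O(1)$, so coprimality yields $N^{(1)}_{\Pro^1}([0]+[\infty], x) = \log\rad(bc) + O(1)$.

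Applying Theorem \ref{ThmMain1} with $\epsilon/2$ in place of $\epsilon$ gives an upper bound for $-\log|\alpha - x|_v$ whose exponent equals $\frac{(1+\epsilon/2)\log^*\rad(bc)}{\log^*_2\rad(bc)} \log^*_3\rad(bc)$. Writing $f(N) = N(\log^*_2 N)/\log^* N$, this exponent equals $(1+\epsilon/2)\, f(\log^*\rad(bc))$, while the target exponent in Theorem \ref{ThmMain1bis} equals $(1+\epsilon)\, f(N^{(1)}_{\Pro^1})$ with $N^{(1)}_{\Pro^1} = \log^*\rad(bc) + O(1)$. Since $f$ is slowly varying, $f(N + O(1))/f(N) \to 1$ as $N \to \infty$, so for $\rad(bc)$ above a threshold depending only on $\epsilon$ the exponent from Theorem \ref{ThmMain1} is at most the target exponent; for $\rad(bc)$ below that threshold both exponents are bounded and the resulting discrepancy is absorbed into $\kappa(v, \alpha, \epsilon)$. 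The case $|\alpha - x|_v \geq 1$ is similarly handled, since then $\lambda_{\Pro^1, v}(\alpha, x) = O(1)$ while the target right-hand side is at least $\log^* h(x)\cdot \exp(\kappa(v,\alpha,\epsilon)) \geq \exp(\kappa(v,\alpha,\epsilon))$, which can be arranged to dominate any fixed $O(1)$.

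The conceptual content of the proof lies entirely in Theorem \ref{ThmMain1}; what remains here is essentially bookkeeping. The only mildly technical step is the asymptotic comparison of the iterated-logarithm expressions in the second paragraph, which is a routine calculus estimate for the slowly-varying function $f$, together with the fact that the $(1+\epsilon/2)$ to $(1+\epsilon)$ slack provides room to absorb the $1+o(1)$ error introduced by the bounded additive perturbation in the argument of $f$.
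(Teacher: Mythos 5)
Your proposal is correct and matches the paper's intent: the paper states Theorem \ref{ThmMain1bis} as an immediate reformulation of Theorem \ref{ThmMain1}, relying on exactly the identifications you spell out ($\lambda_{\Pro^1,v}(\alpha,x)=\log^+|\alpha-x|_v^{-1}+O(1)$ and $N^{(1)}_{\Pro^1}([0]+[\infty],x)=\log\rad(bc)+O(1)$ by coprimality). Your extra care with the $O(1)$ shift inside the slowly varying exponent, absorbed by the $\epsilon$-slack and the constant $\kappa(v,\alpha,\epsilon)$, is precisely the bookkeeping the paper leaves implicit.
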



\subsection{Consequences for $abc$}

\begin{proof}[Proof of Corollary \ref{CoroSubExpABC}] We apply Theorem \ref{ThmMain1}  with $v=\infty$ the archimedian place, $\alpha=1$ and $x=b/c$. One observes that $-\log |1-x|=\log(c/a)$ and then we use \eqref{EqnABC3} (or any of \cite{ABC1,ABC2,ABC3,MurtyPasten}) to get bound 
$$
\log^*_2h(x) = \log^*_3 c +O(1) < \log^*_2 R + O(1)
$$ 
with an effective error term. In this way we get
$$
\log c -\log a < \exp\left(\log^*_2 R + \frac{(1+\epsilon)\log \rad(bc)}{\log^*_2 \rad(bc)}\log^*_3 \rad(bc)  +O_\epsilon(1)\right)
$$
where the implicit constant effectively depends on $\epsilon$. The result follows.
\end{proof}


\section{The main result in arbitrary dimension}\label{SecMain2}


\subsection{The Main Theorem} 

\begin{proof}[Proof of Theorem \ref{ThmMain2}] \label{SecMainProof} Since $D_1,...,D_m$ are linearly dependent modulo linear equivalence over $\Q$, there is a non-constant rational function $f:X\dasharrow \Pro^1$ defined over $\Q$ such that the effective divisor $f^*([0]+[\infty])$ is supported on $D$. Let $E$ be the locus where $f$ is not defined; it has codimension at least $2$ in $X$. 

We claim that $E$ is contained in $\supp(D)$. Indeed, let $H_0=f^*[0]$ and $H_\infty = f^*[\infty]$; these are linearly equivalent divisors on $X$ supported on $D$. Then $E$ is the base locus of the linear system determined by $f$, which is contained in $\supp(H_0)\cap\supp(H_\infty)\subseteq \supp(D)$.

There is a smooth projective variety $\widetilde{X}$ over $\Q$ with morphisms $\pi:\widetilde{X}\to X$ and $\widetilde{f}:\widetilde{X}\to \Pro^1$ defined over $\Q$ such that the following holds:
\begin{itemize}
\item[(i)] $\pi$ is an isomorphism above $X-E$, and
\item[(ii)] $f\circ \pi= \widetilde{f}$ as rational functions on $\widetilde{X}$.
\end{itemize}
That is, the triple $(\widetilde{X},\pi,\widetilde{f})$ is a resolution of the rational map $f$. Let $F_0=\widetilde{f}^*[0]$ and $F_\infty=\widetilde{f}^*[\infty]$; these are effective non-zero divisors on $\widetilde{X}$. Notice that $\widetilde{f}^*\Ocal(2)\simeq \Ocal(F_0+F_\infty)$, hence, Lemma \ref{LemmaFunctoriality} shows that for all $x'\in \widetilde{X}(\Q)$

\begin{equation}\label{Eqn1}
2h(\widetilde{f}(x')) = h_{\widetilde{X}}(\Ocal(F_0+F_\infty),x')+O(1).
\end{equation}

Choose $M$ a large enough positive integer such that $M\cdot \pi^*D\ge F_0+F_\infty$, which is possible since $f^*([0]+[\infty])=H_0+H_\infty$ is supported on $D$ and $E\subseteq \supp(D)$.

Let $P\in X(\overline{\Q})$ be an algebraic point not in $ \supp(D)$ (hence, not in $E$) and let $P'\in \widetilde{X}(\overline{\Q})$ be its only preimage by $\pi$. Let $\alpha=f(P)=\widetilde{f}(P')\in \Pro^1(\overline{\Q})$ and notice that $\alpha\ne 0,\infty$ since $f^*([0]+[\infty])$ is supported on $D$ and $E\subseteq \supp(D)$. Let $A$ be the divisor on $\Pro^1$  constructed from the Galois orbit of $\alpha$; it is defined over $\Q$ and it has $\alpha$ in its support. Let $G=\widetilde{f}^*A$, which is a divisor on $\widetilde{X}$ defined over $\Q$ with $P'$ in its support, and define 
$$
Z=\supp(D)\cup \pi(\supp(G))
$$
which is a proper Zariski closed subset of $X$.

Fix $v\in M_\Q$. For every $x\in (X-Z)(\Q)$ there is a unique point $x'\in \widetilde{X}(\Q)$ with $\pi(x')=x$, and for such points the following holds (for simplicity, expressions are written up to adding a bounded error term $O(1)$ independent of $x$ and $x'$, and (ii) and Lemma \ref{LemmaFunctoriality} are used several times without mentioning them):
\begin{itemize}
\item[(a)] By  \eqref{Eqn1} and $F_0+F_\infty\le M\cdot \pi^*D$ we have 
$$
\begin{aligned}
2h(f(x))&=2h(\widetilde{f}(x')) = h_{\widetilde{X}}(\Ocal(F_0+F_\infty),x')\\
&\le h_{\widetilde{X}}(\Ocal(M\cdot \pi^*D), x')= M\cdot h_X(\Ocal(D),x).
\end{aligned}
$$ 
\item[(b)]  By Lemma \ref{LemmaProximity} and (i) we have
$$
\lambda_{X,v}(P,x) = \lambda_{\widetilde{X},v}(P',x') \le \lambda_{\widetilde{X},v}(G,x') = \lambda_{\Pro^1,v}(A, f(x)).
$$ 
\item[(c)] Let $\alpha_1,...,\alpha_h\in \Pro^1(\overline{\Q})$ be the Galois conjugates of $\alpha$. Since $f(x)$ can be close to at most one of them in the $v$-adic metric, we have  
$$
\lambda_{\Pro^1,v}(A, f(x))=\max_{1\le j\le h}\lambda_{\Pro^1,v}(\alpha_j, f(x)).
$$
\item[(d)] Since $F_0+F_\infty\le M\cdot \pi^*D$ and the truncated counting function only depends on the support of an effective divisor, we find
$$
\begin{aligned}
N^{(1)}_{\Pro^1}([0]+[\infty],f(x))&=N^{(1)}_{\widetilde{X}}(F_0+F_\infty,x')\\
&\le N^{(1)}_{\widetilde{X}}(\pi^*D,x') =N^{(1)}_{X}(D,x).
\end{aligned}
$$
\end{itemize}
Let $\epsilon>0$ and, in the notation of Theorem \ref{ThmMain1bis}, let $\kappa'(v,\alpha,\epsilon)=\max_{1\le j\le h} \kappa(v,\alpha_j,\epsilon)$. Define
$$
\xi_\epsilon(t)=\exp\left(\frac{(1+\epsilon)t}{\log^*t}\log^*_2 t + \kappa'(v,\alpha,\epsilon)\right).
$$
Then, for $x\in (X-Z)(\Q)$ we have
$$
\begin{aligned}
\lambda_{X,v}(P,x) & <  \max_{1\le j\le h}\lambda_{\Pro^1,v}(\alpha_j, f(x)) + O(1) &\mbox{ by (b) and (c)} \\
& <  (\log^* h(f(x)))\xi_\epsilon\left(N^{(1)}_{\Pro^1}([0]+[\infty],f(x))\right) +O(1)& \mbox{ by Theorem \ref{ThmMain1bis}}\\
& <  \left(\log^* \left(\frac{M}{2}\cdot h(\Ocal(D),x)+O(1)\right)\right)\xi_\epsilon\left(N^{(1)}_{X}(D,x)+O(1)\right) +O(1)& \mbox{ by (a) and (d).}
\end{aligned}
$$
We conclude by adjusting the number $\kappa'(v,\alpha,\epsilon)$.
\end{proof}


\subsection{Linear dependence}

\begin{proof}[Proof of Theorem \ref{ThmRat}] Let $r=\rk \Pic^0(X/\Q)$ and $n=\rho(D_1,...,D_m)$, so that $m>r+n$.  Let $\Gamma$ be the group generated by the linear equivalence classes of $D_1,...,D_m$ over $\Q$ or, equivalently, over $\C$. Then $\rk \Gamma \le r+n$ by $\ker(\NS(X_\C)\to \Num(X_\C))=\NS(X_\C)_{tor}$, by the exact sequence
$$
0\to \Pic^0(X_\C/\C)\to \Pic(X_\C/\C)\to \NS(X_\C)\to 0,
$$
and by the fact that $\Gamma\le \Pic(X/\Q)$. See Paragraph \ref{SecDiv} for details. Since $m>r+n$, the divisors $D_1,...,D_m$ cannot be linearly independent modulo linear equivalence over $\Q$.
\end{proof}


\subsection{Combining Theorems \ref{ThmMain2} and \ref{ThmRat}} 

\begin{proof}[Proof of Corollary \ref{CoroMain}] Notice that for any $\epsilon>0$ and positive real numbers $A,B$, we have $AB < A^{1+\epsilon} + B^{1+1/\epsilon}$. Using this inequality twice, we see that \eqref{EqnMain2} implies \eqref{EqnCoroMain}. By Theorem \ref{ThmRat}, the necessary linear dependence condition follows from the given numerical hypothesis on $m$.
\end{proof}


\section{Remarks on the Lang-Waldschmidt Conjecture}\label{SecLW}


\subsection{The Lang-Waldschmidt Conjecture} Let us recall the following conjecture proposed by Lang and Waldschmidt, which predicts a strong archimedian lower bound for non-vanishing linear forms in logarithms. We state it in its multiplicative form. See p.212-217 in \cite{Lang} and Conjecture 2.5 in \cite{Waldschmidt}:
\begin{conjecture}[Lang-Waldschmidt 1978] \label{ConjLW} Let $\epsilon>0$. There is a number $C(\epsilon)$ depending only on $\epsilon$ such that for all positive integers $a_1,...,a_n$ and non-zero integers $b_1,...,b_n$ with $a_1^{b_1}\cdots a_n^{b_n}\ne 1$ we have 
$$
\left|a_1^{b_1}\cdots a_n^{b_n} - 1\right| \ge \frac{C(\epsilon)\max_j |b_j|}{\left|b_1\cdots b_na_1\cdots a_n\right|^{1+\epsilon}}.
$$
\end{conjecture}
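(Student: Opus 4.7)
The plan is to mimic the geometric reduction in the proof of Theorem~\ref{ThmMain2}, replacing the appeal to Lemma~\ref{LemmaEG} by the sharper archimedean lower bound of Conjecture~\ref{ConjLW}. The hypothesis $P\in X(\Q)$ is essential: it forces $\alpha=f(P)$ to be a single rational point of $\Pro^1$, which is the setting in which Conjecture~\ref{ConjLW} applies.

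First I would use the linear dependence of $D_1,\ldots,D_m$ to produce a non-constant rational map $f:X\dashrightarrow\Pro^1$ over $\Q$ with $f^*([0]+[\infty])$ supported on $D$, resolve it to $\pi:\widetilde X\to X$, $\widetilde f:\widetilde X\to\Pro^1$, and set $\alpha=f(P)\in\Pro^1(\Q)\setminus\{0,\infty\}$ and $Z=\supp(D)\cup\pi(\supp(\widetilde f^*[\alpha]))$, exactly as in Section~\ref{SecMainProof}. For $x\in(X-Z)(\Q)$ with unique preimage $x'\in\widetilde X(\Q)$, items (a), (b), (d) of the proof of Theorem~\ref{ThmMain2} show that it suffices to establish the following one-variable statement: for any $\epsilon>0$ and every $y=b/c\in\Q^\times\setminus\{\alpha\}$ with $b,c$ coprime integers,
$$
-\log|y-\alpha|_\infty<(1+\epsilon)\log\rad(bc)+\epsilon\cdot h(y)+O_{\alpha,\epsilon}(1).
$$
(The multiplicative constant $M/2$ coming from step (a) is absorbed by appropriately rescaling $\epsilon$.)

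To prove this one-variable bound, I would write $\alpha=u/v$ with $\gcd(u,v)=1$ and factor $y/\alpha=bv/(cu)=\prod_{i=1}^n p_i^{b_i}$ over the primes appearing with non-zero exponent. The primes dividing $uv$ are finitely many and fixed, so they contribute only $O_\alpha(1)$ and the remaining primes are among those dividing $bc$; in particular $\sum_i\log p_i\le\log\rad(bc)+O_\alpha(1)$. Applying Conjecture~\ref{ConjLW} with parameter $\epsilon$ to $\prod p_i^{b_i}=y/\alpha\ne 1$ and taking logarithms gives
$$
-\log|y/\alpha-1|\le(1+\epsilon)\log\rad(bc)+(1+\epsilon)\sum_{i=1}^n\log|b_i|-\log\max_i|b_i|+O_{\alpha,\epsilon}(1).
$$

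The heart of the argument is absorbing the correction $(1+\epsilon)\sum_i\log|b_i|-\log\max_i|b_i|$ into $\epsilon\cdot h(y)+O_\epsilon(1)$. Since $b,c$ are coprime, $|b_i|\le h(y)/\log p_i+O_\alpha(1)$ and $\sum_i|b_i|\log p_i\le 2h(y)+O_\alpha(1)$; a Lagrange multiplier (or weighted AM--GM) computation under these constraints yields
$$
\sum_i\log|b_i|\le n\log(2h(y)/n)+O(n),
$$
together with the prime-counting bound $n\le(1+o(1))\log\rad(bc)/\log\log\rad(bc)$ from the prime number theorem. A short case analysis on the ratio $h(y)/\log\rad(bc)$ then shows the correction is indeed at most $\epsilon\cdot h(y)+O_\epsilon(1)$ for each fixed $\epsilon>0$. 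The main obstacle is precisely this final step: in the borderline regime $h(y)\sim\log\rad(bc)$, the principal term $(1+\epsilon)\log\rad(bc)$ and the slack $\epsilon\cdot h(y)$ are of the same order of magnitude, and one needs the sharper bound $n\ll\log\rad(bc)/\log\log\rad(bc)$, rather than the trivial $n\le\log\rad(bc)/\log 2$, to keep the exponent correction subdominant.
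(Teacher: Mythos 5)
There is a fundamental mismatch here: the statement you were asked to prove is Conjecture~\ref{ConjLW} itself, the Lang--Waldschmidt conjecture. This is a wide-open problem in transcendence theory (it would dramatically sharpen the Baker--Matveev lower bounds for linear forms in logarithms), and the paper does not prove it --- it only states it, cites a heuristic in Lang's book, and uses it as a \emph{hypothesis} in Theorem~\ref{ThmLW}. Your argument is circular with respect to the stated goal: in the middle of the proof you write ``Applying Conjecture~\ref{ConjLW} with parameter $\epsilon$ to $\prod p_i^{b_i}=y/\alpha\ne 1$\dots'', i.e.\ you invoke the very statement you are supposed to establish. No amount of geometric reduction or prime-counting can close that gap; nothing in the paper (nor in the cited literature) supplies a proof of the conjecture.

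What you have actually written is, in substance, the paper's proof of Theorem~\ref{ThmLW}: the reduction via $f:X\dashrightarrow\Pro^1$ and its resolution is taken verbatim from Section~\ref{SecMainProof}, and your one-variable estimate $-\log|y-\alpha|_\infty<(1+\epsilon)\log\rad(bc)+\epsilon\,h(y)+O_{\alpha,\epsilon}(1)$ is exactly Lemma~\ref{LemmaLW} (the paper normalizes $f(P)=1$ so that $\alpha=1$, which lets it avoid your bookkeeping with $\alpha=u/v$; otherwise the steps --- bounding $\log\prod_j|b_j|$ by $\delta h(y)$ via divisor-counting and $n=\omega(Q)<\eta\log Q$ --- coincide with the paper's). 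So as a proof of the implication ``Lang--Waldschmidt $\Rightarrow$ Theorem~\ref{ThmLW}'' your sketch is essentially correct and essentially the paper's; as a proof of Conjecture~\ref{ConjLW} it proves nothing. You should state clearly that the conjecture is assumed, not derived.
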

A heuristic for this conjecture is provided in p.212-217 of \cite{Lang}.


\subsection{A consequence} 

\begin{lemma}\label{LemmaLW} Suppose that the Lang-Waldschmidt conjecture holds. Let $\epsilon>0$. For all $x\in \Pro^1-\{0,1,\infty\}$ we have
$$
\lambda_{\Pro^1,\infty}(1,x) < (1+\epsilon)N^{(1)}([0]+[\infty],x) + \epsilon h(x) + O_\epsilon(1).
$$
\end{lemma}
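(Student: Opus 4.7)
The plan is to apply Conjecture \ref{ConjLW} directly to $x\in\Q^\times$ expressed as a product of prime powers, and then to absorb the resulting combinatorial defect using a sharp bound on $\omega(bc)$. First I would reduce to the case where $|1-x|_\infty\le 1$, so that $\lambda_{\Pro^1,\infty}(1,x)=-\log|1-x|_\infty+O(1)$; otherwise the lemma is trivial. Writing $x=b/c$ in lowest terms, letting $p_1,\ldots,p_n$ be the primes dividing $bc$, and setting $u_i=v_{p_i}(b)-v_{p_i}(c)$, coprimality forces $u_i\neq 0$, while $x\ne 1$ forces $n\ge 1$ and $\prod p_i^{u_i}\ne 1$. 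Applying Conjecture \ref{ConjLW} with $a_i=p_i$ and $b_i=u_i$, and dropping the favourable factor $\max_j|u_j|\ge 1$, I obtain
\begin{equation*}
-\log|1-x|_\infty \;\le\; (1+\epsilon)\log\rad(bc)\;+\;(1+\epsilon)\log\prod_{i=1}^n|u_i|\;+\;O_\epsilon(1).
\end{equation*}
Since $\log\rad(bc)=N^{(1)}_{\Pro^1}([0]+[\infty],x)+O(1)$, this is already the desired main term, and it remains to prove that $(1+\epsilon)\log\prod_i|u_i|\le \epsilon\,h(x)+O_\epsilon(1)$.

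For this combinatorial step I would start from the identity $\sum_i|u_i|\log p_i=\log|b|+\log|c|\le 2h(x)$. Combined with $\log p_i\ge \log 2$ and AM-GM applied to the numbers $|u_i|\log p_i$, this gives $\prod_i|u_i|\le\bigl(2h(x)/(n\log 2)\bigr)^n$, so
\begin{equation*}
\log\prod_i|u_i|\;\le\;n\log\!\left(\frac{2h(x)}{n\log 2}\right).
\end{equation*}
The classical upper bound $\omega(m)\le (1+o(1))\log m/\log\log m$ applied to $m=bc$ yields $n=O(h(x)/\log h(x))$. Since the function $t\mapsto t\log(C/t)$ is increasing on $(0,C/e)$, substituting this worst-case value of $n$ gives
\begin{equation*}
\log\prod_i|u_i|\;=\;O\!\left(\frac{h(x)\log\log h(x)}{\log h(x)}\right)\;=\;o(h(x)),
\end{equation*}
which for any fixed $\epsilon>0$ is eventually at most $\epsilon\,h(x)/(1+\epsilon)$; points $x$ with bounded $h(x)$ contribute only to $O_\epsilon(1)$.

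The hardest step will be the combinatorial estimate in the previous paragraph: I have to check that the polynomial factor $\prod|u_i|$ coming out of the Lang-Waldschmidt bound, taken to the $(1+\epsilon)$-power and combined with the worst possible prime count $n\sim \log h(x)/\log\log h(x)$, stays genuinely negligible against $h(x)$. The estimate succeeds exactly because the $u_i$ cannot simultaneously be many and large: their $\log p_i$-weighted sum is controlled by $2h(x)$, and the Lang-Waldschmidt bound allocates just enough room $(1+\epsilon)$ on the exponent product to swallow this. All remaining manipulations are routine manipulations of height, Weil, and truncated counting functions on $\Pro^1$.
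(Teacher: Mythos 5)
Your proposal is correct and follows essentially the same route as the paper: apply the Lang--Waldschmidt conjecture to the prime factorization of $x$, identify $\log\rad(bc)$ with $N^{(1)}([0]+[\infty],x)+O(1)$, and absorb the exponent product $\prod_i|u_i|$ into $\epsilon\, h(x)+O_\epsilon(1)$. The only (harmless) difference is in that elementary step: the paper bounds $\log\prod_j|b_j|$ via the divisor-function bound $d(m)\ll_\delta m^\delta$ and handles the prime count with the weaker estimate $\omega(Q)<\eta\log Q+O_\eta(1)$, whereas you use AM--GM together with $\omega(m)\ll \log m/\log\log m$; both yield the required $o(h(x))$ bound.
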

\begin{proof} We let $p_1,...,p_n$ be the primes in the support of $x$; we may assume $n\ge 1$ and $x>0$. Let us write $x=p_1^{b_1}\cdots p_n^{b_n}$ with integer non-zero exponents $b_j$. The Lang-Waldschmidt conjecture with the bound $\max_j |b_j|\ge 1$ gives
$$
\lambda_{\Pro^1,\infty}(1,x) < n\cdot c(\epsilon) + (1+\epsilon)\log \prod_{j=1}^n|b_j| + (1+\epsilon)\log Q
$$
with $c(\epsilon)=-\log C(\epsilon)$ and $Q=\prod_{j=1}^np_j$. By elementary number theory (e.g. using a bound for the number of divisors of an integer) for each $\delta>0$ we have 
$$
\log \prod_{j=1}^n|b_j| < \delta \log (p_1^{|b_1|}\cdots p_n^{|b_n|}) + O_\delta(1) \le 2\delta h(x) + O_\delta(1).
$$
We notice that for every $\eta>0$ we have
$$
n=\omega(Q)< \eta \log Q + O_\eta(1),
$$
and
$$
 \log Q = N^{(1)} ([0]+[\infty],x)+ O(1).
$$
Putting these bounds together and adjusting $\epsilon, \delta, \eta$ we get the result.
\end{proof}

\subsection{The conjectures of Lang-Waldschmidt and Vojta}

\begin{proof}[Proof of Theorem \ref{ThmLW}]  There is a rational function $f:X\dasharrow \Pro^1$ with $f^*([0]+[\infty])$ supported on $D$. The indeterminacy locus of $f$ is contained in $\supp(D)$ and we may multiply $f$ by a suitable rational number to achieve $f(P)=1$. After this point, the proof goes in exactly the same way as the proof of Theorem \ref{ThmMain2} (cf. Section \ref{SecMainProof}), using Lemma \ref{LemmaLW} instead of Theorem \ref{ThmMain1}.
\end{proof}


\section{Acknowledgments}

This research was supported by ANID (ex CONICYT) FONDECYT Regular grant 1190442 from Chile.

I heartily thank K\'alm\'an Gy\"ory for valuable feedback on a first version of this manuscript and for suggesting to use Theorem 4.2.1 \cite{EGbook} instead of \cite{Matveev, Yu}, which led to a more streamlined presentation.



\begin{thebibliography}{9}         

\bibitem{EGbook} J.-H. Evertse, K. Gy\"ory, \emph{Unit equations in Diophantine number theory}. Cambridge Studies in Advanced Mathematics, 146. Cambridge University Press, Cambridge, 2015. xv+363 pp. ISBN: 978-1-107-09760-5

\bibitem{Gyory08} K. Gy\"ory, \emph{On the $abc$ conjecture in algebraic number fields}. Acta Arith. 133 (2008), 281-295.

\bibitem{Gyory} K. Gy\"ory, \emph{$S$-unit equations and Masser's $ABC$ conjecture in algebraic number fields}. Publ. Math. Debrecen, 100/3-4 (2022) 499-511.

\bibitem{Lang} S. Lang, \emph{Elliptic curves: Diophantine analysis}. Grundlehren der Mathematischen Wissenschaften [Fundamental Principles of Mathematical Sciences], 231. Springer-Verlag, Berlin-New York, 1978. xi+261 pp. ISBN: 3-540-08489-4 

\bibitem{leFourn} S. Le Fourn, \emph{Tubular approaches to Baker's method for curves and varieties}. Algebra Number Theory 14 (2020), no. 3, 763-785.

\bibitem{Levin} A. Levin, \emph{Linear forms in logarithms and integral points on higher-dimensional varieties}. Algebra Number Theory 8 (2014), no. 3, 647-687. 

\bibitem{Matveev} E. M. Matveev,  \emph{An explicit lower bound for a homogeneous rational linear form in logarithms of algebraic numbers. II.} (Russian) Izv. Ross. Akad. Nauk Ser. Mat. 64 (2000), no. 6, 125-180; translation in Izv. Math. 64 (2000), no. 6, 1217-1269.

\bibitem{MR} D. McKinnon, M. Roth, \emph{Seshadri constants, diophantine approximation, and Roth's theorem for arbitrary varieties}. Invent. Math. 200 (2015), no. 2, 513-583.

\bibitem{MurtyPasten} M. R. Murty, H. Pasten,  \emph{Modular forms and effective Diophantine approximation}. J. Number Theory 133 (2013), no. 11, 3739-3754.

\bibitem{Robin}   G. Robin, \emph{Estimation de la fonction de Tchebychef $\theta$ sur le $k$-i\`eme nombre premier et grandes valeurs de la fonction $\omega(n)$ nombre de diviseurs premiers de $n$}.  Acta Arith. 42 (1983), no. 4, 367-389.

\bibitem{Scoones} A. Scoones, \emph{On the $abc$ conjecture in algebraic number fields}. Preprint (2022) arXiv:2111.07791

\bibitem{ABC1} C. L. Stewart, R. Tijdeman, \emph{On the Oesterl\'e-Masser conjecture}. Monatsh. Math. 102 (3) (1986) 251-257.

\bibitem{ABC2} C. L. Stewart, K. R. Yu, \emph{On the $abc$ conjecture}. Math. Ann. 291 (2) (1991) 225-230.

\bibitem{ABC3} C. L. Stewart, K. R. Yu, \emph{On the $abc$ conjecture. II}. Duke Math. J. 108 (1) (2001) 169-181. 

\bibitem{VojtaThesis} P. Vojta \emph{Diophantine approximations and value distribution theory}. Lecture Notes in Mathematics, 1239. Springer-Verlag, Berlin, 1987. x+132 pp. ISBN: 3-540-17551-2 

\bibitem{VojtaABC} P. Vojta, \emph{A more general abc conjecture.}  Internat. Math. Res. Notices 1998, no. 21, 1103-1116. 


\bibitem{VojtaCIME} P. Vojta, \emph{Diophantine approximation and Nevanlinna theory.} Arithmetic geometry, 111-224, Lecture Notes in Math., 2009, Springer, Berlin, 2011.

\bibitem{Waldschmidt} M. Waldschmidt, \emph{Open Diophantine problems}. Mosc. Math. J. 4 (2004), no. 1, 245-305, 312. 

\bibitem{Yu} K. Yu, \emph{$p$-adic logarithmic forms and group varieties. III}. Forum Math. 19 (2007), no. 2, 187-280.

\end{thebibliography}
\end{document}